\newtheorem{definition}{Definition}
\newtheorem{theorem}{Theorem}
\newtheorem{lemma}{Lemma}
\newtheorem{property}{Property}
\begin{document}
\begin{spacing}{1.15}

\title{Independent Italian Domination on Block Graphs \vspace{-2em} }
\date{}
\maketitle
\begin{center}
\author{Decheng Wei, Changhong Lu$^{*}$ \\
\normalsize{School of Mathematical Sciences} \\
\normalsize{East China Normal University} \\
\normalsize{Shanghai 200241,  P. R. China}}
\end{center}

\begin{abstract}
   Given a graph $G=(V,E)$, $f:V \rightarrow \{0,1,2 \}$ is the Italian dominating function of $G$ if $f$ satisfies $\sum_{u \in N(v)}f(u) \geq 2$ when $f(v)=0$. Denote $w(f)=\sum_{v \in V}f(v)$ as the weight of $f$.
   Let $V_i=\{v:f(v)=i\},i=0,1,2$, we call $f$ the independent Italian dominating function if $V_1 \cup V_2$ is an independent set. The independent Italian domination number of $G$ is the minimum weight of independent Italian dominating function $f$, denoted by $i_{I}(G)$. We equivalently transform the independent domination problem of the connected block graph $G$ to the induced independent domination problem of its block-cutpoint graph $T$, then a linear time algorithm is given to find $i_{I}(G)$ of any connected block graph $G$ based on dynamic programming.

   \textbf{Keywords:} Independent Italian dominating function; Independent Italian domination number;  Block graph; Block-cutpoint graph; Linear time algorithm
\end{abstract}
\footnotetext{\noindent E-mail:ecnuwdc@163.com (D. Wei); chlu@math.ecnu.edu.cn (C. Lu)}

\section{Introduction}
\qquad  In the 3rd century, when Rome dominated Europe, it was able to deploy 50 legions throughout the empire, securing even the furthermost areas. By the following century, Roman's forces had diminished to just 25 legions. Emperor Constantine's problem: How to station legions in sufficient strength to protect the most forward positions of the empire without abandoning the core, namely Rome. He devised a new defensive strategy to cope with Rome's reduced power \cite{Revelle 1, Revelle 2}. Cockayne, Dreyer and Hedetniemi proposed the Roman dominating function in 2000 based on Constantine's strategy \cite{Roman dominating function}. Given a graph $G=(V,E)$, a function $f: V(G) \rightarrow \{0, 1, 2 \}$ is a Roman dominating function of $G$ if there exists $u \in N(v)$ satisfying $f(u) = 2$ when $f(v)=0$. Brešar, Henning and Rall proposed k-rainbow dominating function in 2005 \cite{rainbow domination}. A function $f: V(G) \rightarrow \{0, 1, 2\}$ is a $\{2\}$-dominating function if $\sum_{v \in N[u]}f(v) \geq 2$ for any $u \in V(G)$.  A Roman k-dominating function \cite{Roman k-domination} on $G$ is a function $f: V(G) \rightarrow \{0, 1, 2\}$ such that every vertex $u$ for which $f(u)=0$ is adjacent to at least $k$ vertices $v_1, v_2, ... , v_k$ with $f(v_i)=2$ for $i=1, 2, ... , k$. Chellali, Haynes and Hedetniemi proposed Roman $\{2\}$-dominating function in 2015 \cite{Roman2 domination}.  Henning and Klostermeyer denoted the Roman $\{2\}$-dominating function as Italian dominating function to simplify the description \cite{Italian domination}. A function $f: V(G) \rightarrow \{0,1,2 \}$ is an Italian dominating function of $G$ if there exists $u \in N(v)$ satisfying $\sum_{u \in N(v)}f(u) \geq 2$  when $f(v)=0$.  $w(f)= \sum_{v \in V}{f(v)}$ is the weight of $f$.  The Italian domination number of $G$ is the minimum weight of an Italian dominating function, denoted as $\gamma_{I}(G)$ and the corresponding function is called the minimum Italian dominating function. Let $V_i=\{v:f(v)=i\},i=0,1,2$, we call $f$ the independent Italian dominating function if $V_1 \cup V_2$ is an independent set. The independent Italian domination number of $G$ is the minimum weight of its independent Italian dominating function, denoted by $i_{I}(G)$ and the corresponding function is called the minimum independent Italian dominating function, denoted as $i_{I}$-function. 
 For any vertex $v \in V$, the open neighborhood of $v$ is the set $N(v)=\{u \in V: uv \in E \}$ and the closed neighbourhood is the set $N[v]=N(v) \cup \{v\}$. Generally, $N_{G}(v)$ and $N_{G}[v]$ represents the open neighborhood and closed neighborhood  of $v$ in $G$ respectively. We set $N^{2}(v) = \cup_{u \in N(v)}N(u)$ and  $N_{G}^{2}(v) = \cup_{u \in N_{G}(v)}N_{G}(u)$.
 A cut vertex in a connected graph is a vertex whose deletion breaks the graph into two or more parts. The subgraph $H$ of $G$ is a block if it is maximal and has no cut-vertex. We call the vertex $v$ an uncut-vertex if $v$ is in a block and $v$ is not a cut-vertex. If the subgraph of a graph is an complete graph, then the subgraph is a clique. $G$ is a block graph if each block of $G$ is a clique. we define the block-cutpoint graph of a block graph as follows.

\begin{definition}\label{definition1}
       Given a block graph $G$, the block-cutpoint graph of $G$ is a bipartite graph $T$ in which one partite set consists of the cut-vertices of $G$ and the other one consists of vertex $b_i$  corresponding to each block $B_i$ of $ G$.  $vb_i \in E(T)$  if and only if $v \in B_i$ where $v$ is a cut-vertex and $E(T)$ is the set of edges of $T$, calling $b_i$ the block-vertex of $T$.
\end{definition}

\begin{lemma}\label{lemma1}
      The block-cutpoint graph $T$ is a tree when the corresponding graph $G$ is a connected block graph with at least one cut-vertex.
\end{lemma}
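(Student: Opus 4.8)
The plan is to verify the two defining properties of a tree for the bipartite graph $T$ of Definition~\ref{definition1}: that $T$ is connected and that $T$ is acyclic. Throughout I will use three standard structural facts about the blocks of a connected graph, each of which follows quickly from the maximality built into the definition of a block: (i) every edge of $G$ lies in a unique block; (ii) two distinct blocks share at most one vertex, and any such shared vertex is a cut-vertex; (iii) a vertex lies in at least two blocks if and only if it is a cut-vertex. Since $G$ has a cut-vertex, $T$ has at least one vertex in each partite class and at least one edge.

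For connectedness, I would take an arbitrary pair of vertices of $T$ and construct a walk between them. If the pair consists of two block-vertices $b_i,b_j$, pick $u\in B_i$, $w\in B_j$, and a path $P$ from $u$ to $w$ in $G$ (which exists because $G$ is connected). Listing, in order along $P$, the blocks to which the successive edges of $P$ belong (well defined by (i)), any two consecutive blocks in this list meet at a vertex of $P$, which by (ii) is a cut-vertex; this produces a walk in $T$ from $b_i$ to $b_j$. If one of the two vertices is a cut-vertex $v$, then by (iii) $v$ lies in some block $B_k$, so $vb_k\in E(T)$ and we reduce to the previous case. Hence any two vertices of $T$ are joined by a walk, so $T$ is connected.

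For acyclicity, suppose for contradiction that $T$ contains a cycle. By bipartiteness it alternates block-vertices and cut-vertices, so it reads $b_1 v_1 b_2 v_2 \cdots b_m v_m b_1$ with $m\ge 2$, the $b_i$ pairwise distinct, the $v_i$ pairwise distinct, $v_i\in B_i\cap B_{i+1}$ (indices modulo $m$), and in particular $v_{i-1}\ne v_i$ for every $i$. If $m=2$ this contradicts (ii) at once, since $v_1\ne v_2$ would be two common vertices of the distinct blocks $B_1,B_2$. For $m\ge 3$, I would consider the subgraph $H=B_1\cup\cdots\cup B_m$, show it is connected (the blocks are chained cyclically through the $v_i$, and each $B_i$ contains the distinct vertices $v_{i-1},v_i$), and show it has no cut-vertex: removing some $v_j$ leaves the blocks chained along a path with each $B_i-v_j$ still connected, while removing a vertex of $B_j$ that is not one of the $v_i$ affects only $B_j$, which remains connected because $B_j$ has no cut-vertex. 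A connected subgraph without a cut-vertex is contained in a single block $B$; then $B_i\subseteq B$ for every $i$, so by the maximality of blocks $B_i=B$ for all $i$, contradicting that $b_1,\dots,b_m$ are distinct. Thus $T$ is acyclic, and being connected as well, $T$ is a tree.

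I expect the acyclicity argument to be the main obstacle, and within it the careful part is to show that $H$ has no cut-vertex while disposing of the degenerate cases (blocks equal to a single edge $K_2$, and the base case $m=2$); once that is settled, the principle that a cut-vertex-free connected subgraph lies inside one block, combined with the maximality of blocks, closes the argument.
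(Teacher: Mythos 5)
Your proof is correct, and while it follows the same overall outline as the paper's (show that $T$ is connected and acyclic), the substance of your acyclicity step is genuinely different and, in fact, more complete. The paper's entire justification for acyclicity is the observation that two blocks share at most one vertex; taken literally this only rules out cycles of length $4$ in $T$ (your case $m=2$), since a longer cycle $b_1v_1b_2v_2\cdots b_mv_mb_1$ with $m\ge 3$ involves only pairwise intersections of size one and so does not contradict that fact directly. Your extra argument --- that the union $H=B_1\cup\cdots\cup B_m$ of the blocks on such a cycle is connected and has no cut-vertex, hence by the maximality in the definition of a block must coincide with a single block, contradicting the distinctness of $b_1,\dots,b_m$ --- is exactly the ingredient the paper's one-liner omits. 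Similarly, for connectedness the paper simply asserts that $T$ is connected because $G$ is, whereas you construct an explicit walk in $T$ from a path in $G$ using the fact that every edge lies in a unique block. One small point to tidy: when you delete a vertex $x\in B_j$ with $x\notin\{v_1,\dots,v_m\}$, $x$ could in principle lie in several of the blocks $B_1,\dots,B_m$, so ``affects only $B_j$'' is slightly imprecise; the argument still goes through, because every $B_i-x$ remains connected and retains its chain vertices $v_{i-1},v_i$, so $H-x$ is connected in any case.
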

\begin{proof}
     It is clear that two blocks of a graph share at most one vertex, then the block-cutpoint graph $T$ has no cycles. As $G$ is connected, $T$ is also connected, So $T$ is a tree. We can construct the block-cutpoint graph $T$ in linear time with Depth-First Search,
     see \cite{Createblock-cutpointgraph}.
\end{proof}

It is obvious that $G$ is a connected block graph without cut-vertices if and only if $G$ is a complete graph and  $i_{I}(K_1)=1$, $i_{I}(K_n)=2 (n\geq 2)$.
If $G$ is a block graph with two connected components $G_1$ and $G_2$, then $i_{I}(G)=i_{I}(G_1) + i_{I}(G_2)$. Therefore, we just need to consider $G$ is a connected block graph with at least one cut-vertex. Without special illustration, the block graphs being referred to in this paper are all  connected block graph with at least one cut-vertex, so the corresponding block-cutpoint graph is a tree. We can obtain the independent Italian domination number of a connected tree through dynamic programming. In this paper, our main task is to find the independent Italian domination number of a block graph. As the block-cutpoint graph of a connected block graph $G$ is a tree, we design a linear time algorithm to output the independent Italian domination number of $G$ based on dynamic programming.

\section{Independent Italian domination in block graphs}

\qquad we classify the block of a graph into three types by the number of vertices and cut-vertices in the block.
\begin{definition} \label{definition2}
     Given a block graph $G$ and its block-cutpoint graph $T$. $B$ is a block of $G$ and $C$ is the set of cut-vertices of $B$. Let $b$ be the corresponding block-vertex of $B$ in $T$.  $B$ $(b)$ is a block $($block-vertex$)$ of type 0 in $G$ $(T)$ if $|B|=|C|$ . $B$ $(b)$ is a block $($block-vertex$)$ of type 1 in $G$ $(T)$ if $|B|=|C|+1$. $B$ $(b)$ is a block $($block-vertex$)$ of type 2 in $G$ $(T)$ if $|B| \geq |C|+2$.
\end{definition}

\begin{figure} \label{figure}
  \centering
  \includegraphics[width=6in]{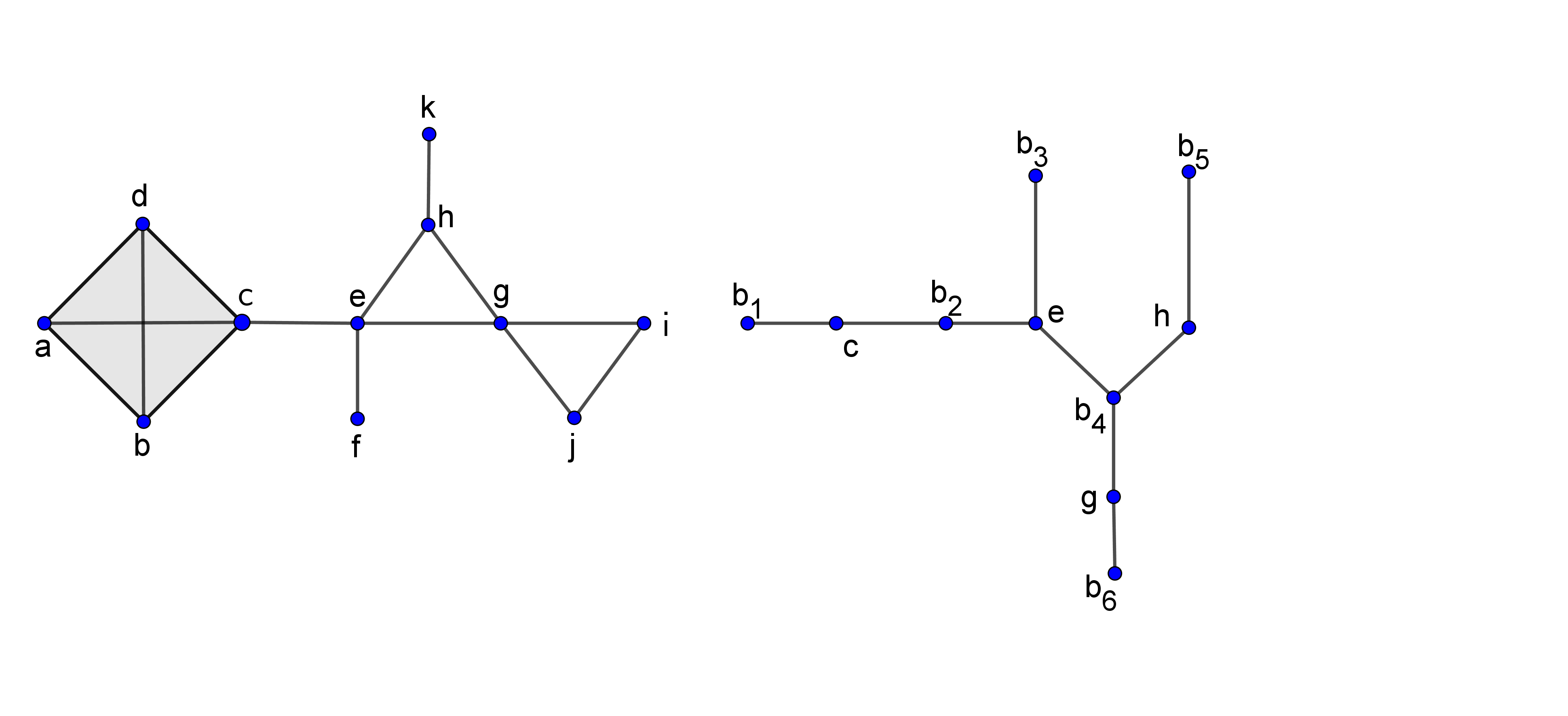}
  \caption{a bock graph and its corresponding block-cutpoint graph}
\end{figure}

\indent In order to express concisely, we denote the block of type 0 as block0  and the block-vertex of type 0 as block0-vertex. Getting block1, block1-vertex,block2 and block2-vertex respectively in the same way. In figure \textcolor{blue}{1}, $b_2$ and $b_4$ are block0-vertices. $b_3$ and $b_5$ are block1-vertices. $b_1$ and $b_6$ are block2-vertices. $c,e,h$ and $g$ are cut-vertices in both graphs. We want to set up an equivalent relationship between a block graph $G$ and its block-cutpoint graph $T$ and then we can transfer the problem of the block graph $G$ to its block-cutpoint graph $T$. Actually, we are trying to  transfer the independent Italian domination problem from a block graph to a tree.  We need to define a new induced function $f_*$ of $T$ which is equivalent to the independent Italian dominating function $f$ of $G$.

\begin{definition}\label{definition3}
      Given a block graph $G$ and its block-cutpoint graph $T$, let $B$ be an arbitrary block of $G$ and $C$ be the set of cut-vertices of $B$ and $b$ be the corresponding block-vertex of $B$ in $T$. $f$ is a function of $G$. $f_*$ is an induced function of $ T$ induced by $f$ if it satisfies $f_*(v)=f(v)$ for each cut-vertex $v$ and $f_*(b) = f(B)-f(C)$ for each block $B$. If $f$ is an Italian dominating function, then the corresponding $f_*$ is an induced Italian  dominating function. If $f$ is an independent Italian dominating function, then the corresponding $f_*$ is an induced independent Italian  dominating function (IIIDF).
\end{definition}

     $w(f_*)=\sum_{v \in V(T)}f_*(v)$ is the weight of $f_*$. It is obvious that $w(f) = w(f_*)$ according to the definition of $f_*$. The induced independent Italian domination number of $T$ is the minimum weight of $f_*$, of which $f$ is the independent Italian dominating function. Denote the induced independent Italian domination number of $T$ as $i_{I}^{*}(T)$. We call the function $f_*$ satisfying $w(f_*) = i_{I}^{*}(G)$ the minimum induced Italian dominating function of $T$, denoted as $i_{I}^{*}$-function.

\begin{lemma}\label{corollary1}
     Given a  block graph $G$ and its block-cutpoint graph $T$, $i_{I}(G)=i_{I}^{*}(T)$.
\end{lemma}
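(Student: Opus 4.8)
The plan is to prove the two inequalities $i_{I}^{*}(T)\le i_{I}(G)$ and $i_{I}(G)\le i_{I}^{*}(T)$ separately; each follows almost directly from Definition \ref{definition3} once the weight identity $w(f)=w(f_*)$ is recorded carefully. The key conceptual observation is that the assignment $f\mapsto f_*$ is weight-preserving in both directions: starting from an independent Italian dominating function of $G$ it produces an IIIDF of $T$ of the same weight, and by the very definition of IIIDF every IIIDF of $T$ is $f_*$ for some independent Italian dominating function $f$ of $G$ with $w(f)=w(f_*)$.

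For $i_{I}^{*}(T)\le i_{I}(G)$: I would take a minimum $i_{I}$-function $f$ of $G$, so $w(f)=i_{I}(G)$ and $V_1\cup V_2$ is independent, and form its induced function $f_*$ on $T$. I would then check $w(f_*)=w(f)$ directly from the bookkeeping: every uncut-vertex of $G$ lies in exactly one block and every cut-vertex is a vertex of $T$ counted once, so $\sum_{b}f_*(b)=\sum_{b}\sum_{u\in B\setminus C}f(u)=\sum_{u\text{ uncut}}f(u)$, hence $w(f_*)=\sum_{v\text{ cut}}f(v)+\sum_{u\text{ uncut}}f(u)=w(f)$. Since $f_*$ is an IIIDF by Definition \ref{definition3}, this gives $i_{I}^{*}(T)\le w(f_*)=i_{I}(G)$. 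For the reverse, I would take a minimum $i_{I}^{*}$-function $f_*$ of $T$; being an IIIDF, it is induced by some independent Italian dominating function $f$ of $G$, and again $w(f)=w(f_*)=i_{I}^{*}(T)$, so $i_{I}(G)\le w(f)=i_{I}^{*}(T)$. Combining the two bounds yields $i_{I}(G)=i_{I}^{*}(T)$.

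The only points needing a little care — rather than a genuine obstacle — are: (i) checking that $f_*$ is a well-defined $\{0,1,2\}$-valued function on $T$, for which I would use that each block $B$ is a clique and $V_1\cup V_2$ is independent, so $B$ contains at most one vertex of positive value; if that vertex is a cut-vertex then $f_*(b)=f(B)-f(C)=0$, if it is an uncut-vertex $u$ then $f_*(b)=f(u)\in\{1,2\}$, and $f_*(b)=0$ when $B$ has no positive vertex; (ii) the weight-summation argument above, which relies precisely on the uncut-vertex-in-a-unique-block fact. There is no deep difficulty here: the statement is essentially a consequence of how $i_{I}^{*}(T)$ was defined (as a minimum over functions induced by independent Italian dominating functions of $G$) together with weight preservation. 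The substantive work — an intrinsic description on $T$ of exactly which $\{0,1,2\}$-valued functions arise as some $f_*$, which is what ultimately enables a linear-time tree dynamic program — is a separate matter handled in the subsequent lemmas.
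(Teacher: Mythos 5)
Your proof is correct and follows essentially the same route as the paper: both directions use the weight-preserving correspondence $f\mapsto f_*$ together with the fact that, by definition, $i_{I}^{*}(T)$ is a minimum over functions induced by independent Italian dominating functions of $G$. Your extra bookkeeping (the block-by-block weight computation and the check that $f_*$ takes values in $\{0,1,2\}$) only makes explicit what the paper leaves as ``obvious.''
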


\begin{proof}
     Let $f$ be an independent Italian dominating function of $G$ and $f_*$ be the induced independent Italian dominating function of $T$ induced by $f$, then $w(f)=w(f_*)$. Obviously, $w(f_*) \geq i_{I}^{*}(T)$. However, if $f_{*}^{'}$ is an induced independent Italian dominating function of $T$ with the weight $ w(f_*^{'})=i_{I}^{*}(T) $ and $f_*^{'}$ is induced by the independent Italian dominating function $f^{'}$ of $G$, then $w(f_*^{'})=w(f^{'})$, so $w(f^{'})=i_{I}^{*}(T)$.
     As $i_{I}(G) \leq w(f^{'})$, then $i_{I}(G) \leq i_{I}^{*}(T)$. We can get $i_{I}(G) \geq i_{I}^{*}(T)$ in the same way. Hence, $i_{I}(G) = i_{I}^{*}(T)$.
\end{proof}

We have set up an relationship between  the block graph $G$ and the block-cutpoint graph $T$. It seems that we have already transferred the Italian domination problem from block graph to its block-cutpoint graph successfully, however, there still remains one problem to research.  Given a function $f_*$ of a block-cutpoint graph $T$, how can we distinguish whether it is an induced independent Italian dominating function or not? This problem will be solved in the following.
Let $G$ be an arbitrary block graph and $T$ be the block-cutpoint graph of $G$. $f$ is an independent Italian dominating function of $G$ and $f_*$ is the corresponding induced independent Italian dominating function  of $T$ induced by $f$. We have the following results.

\begin{theorem}\label{block2-uncut-vertex}
    Let $B$ be an arbitrary block of $G$. $\forall  v \in B $  and $v$ is a cut-vertex, then $f(v) \in \{0, 2\}$ when $B$ is block1 or block2.  $\forall v \in B$ and $v$ is an uncut-vertex, then $f(v) \in \{0, 2\}$ and there exists at most one vertex $v$ such that $f(v) = 2$ when $B$ is block2.
\end{theorem}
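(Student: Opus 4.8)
The plan is to exploit two structural facts repeatedly. First, since each block $B$ is a clique and $V_1 \cup V_2$ is independent, the positive vertices of $B$ form an independent set inside a complete graph, so at most one vertex of $B$ receives a positive value under $f$; in particular at most one vertex of $B$ has $f$-value $2$, which already yields the last clause of the statement (for block2, and indeed for any block). Second, an uncut-vertex $w$ belongs to exactly one block — the one, say $B$, whose vertex set contains it — because a vertex lying in two or more blocks is a cut-vertex; hence $N_G(w) \subseteq B$ and, $B$ being a clique, $N_G(w) = B \setminus \{w\}$.

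So the real content is that $f$ never takes the value $1$ on the vertices named in the statement, and all three cases — a cut-vertex of a block1 or block2, and an uncut-vertex of a block2 — can be dispatched by one contradiction argument. Suppose $f(v) = 1$ for such a $v \in B$. Using Definition \ref{definition2}, I can pick an uncut-vertex $w \in B$ with $w \neq v$: a block2 has at least two uncut-vertices, and a block1 has exactly one uncut-vertex, which is necessarily distinct from the cut-vertex $v$. Since $B$ is a clique, $vw \in E(G)$; since $f(v) = 1 > 0$ and $V_1 \cup V_2$ is independent, $f(w) = 0$. The Italian condition at $w$ then forces $\sum_{u \in N_G(w)} f(u) \geq 2$. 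But $N_G(w) = B \setminus \{w\}$ and $v$ is the unique positive vertex of $B$, so $\sum_{u \in N_G(w)} f(u) = f(v) = 1 < 2$, a contradiction. Therefore $f(v) \in \{0,2\}$ in every case.

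There is no serious obstacle here; the points that need care are (i) justifying that an uncut-vertex has all of its neighbours inside its own block, and (ii) verifying from the block-type definition that the auxiliary uncut-vertex $w \neq v$ really exists — which is precisely why block0 (having no uncut-vertex) is excluded, and why the uncut-vertex clause is confined to block2.
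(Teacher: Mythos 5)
Your proof is correct and follows essentially the same route as the paper's: use independence to force all neighbours of the hypothetical value-$1$ vertex to zero, then contradict the Italian condition at an uncut-vertex $w\neq v$ of the block, whose neighbourhood lies entirely inside the clique $B$, with the ``at most one vertex of value $2$'' clause coming from independence in a clique. Your version merely unifies the paper's two cases into a single argument and is slightly more explicit about $N_G(w)=B\setminus\{w\}$ and the existence of the auxiliary uncut-vertex, which the paper leaves implicit.
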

\begin{proof}
   $\forall v \in  B$ and $B$ is a block1 or block2, assuming that $v$ is a cut-vertex and $f(v)=1$. Considering the independence of $f$, we have $f(u) = 0$ if  $u \in B$ and $u$ is an uncut-vertex. Obviously, $\forall w \in N(v), f(w)=0$, therefore, we get that $\sum_{u^{'} \in N(u)} f(u^{'})=1$, contradiction.  If $v$ is an uncut-vertex in $B$ with $f(v)=1$ and $B$ is a block2, then $f(u)=0, \forall u \in N(v)$ for the independence of $f$. There is at least one uncut-vertex $w^{'} \in N(v)$ such that $f(w^{'}) = 0$, hence we have $\sum_{w^{''} \in N(w^{'})}f(w^{''})=1$, contradiction. Therefore, if $v$ is an uncut-vertex in $B$ and $B$ is a block2, then $f(v) \in \{0, 2\}$. Obviously, there is at most one vertex $v \in B$ such that $f(v)=2$ for the independence of $f$.
\end{proof}
\begin{theorem}\label{ExistPosiblilityOfMiDRDF-02}
     There exists an independent Italian dominating function $f$ of $G$ such that
     $f(v) \in \{0, 1 \}$ where $w(f) = i_{I}(G)$ and $v$ is an uncut-vertex in a block1 of $G$.
\end{theorem}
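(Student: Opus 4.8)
The plan is to start from an arbitrary minimum independent Italian dominating function $f$ of $G$ with $w(f) = i_I(G)$, and to modify it locally, block1 by block1, until every uncut-vertex lying in a block1 has value in $\{0,1\}$ — without increasing the weight and without destroying either the Italian domination condition or the independence condition. So suppose $B$ is a block1 of $G$ and let $v$ be its unique uncut-vertex, and suppose $f(v) = 2$ (the only remaining case after Theorem~\ref{block2-uncut-vertex} is handled, since $f(v)\in\{0,1,2\}$ in general). I would consider the modified function $f'$ that agrees with $f$ everywhere except that $f'(v) = 1$, and then carefully check whether $f'$ is still an independent Italian dominating function; if it is, we have dropped the weight by $1$, contradicting minimality of $f$ unless $f(v)$ was never $2$ in the first place — so actually the claim I want is that the only way $f(v)=2$ can be forced is not forced at all, i.e. $f(v)=2$ is always wasteful or replaceable.

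The key observation driving the argument: the uncut-vertex $v$ of a block1 $B$ has $N(v) = B\setminus\{v\} = C$, the set of cut-vertices of $B$, and $v$ is the \emph{only} vertex of $G$ whose neighborhood is contained in $B$; every other vertex of $B$ is a cut-vertex with neighbors outside $B$ as well. Because $V_1\cup V_2$ is independent and $v\in V_2$, we have $f(u)=0$ for all $u\in C$. Now I would examine, for each cut-vertex $u \in C$, how $u$ is Italian-dominated: $u$ has $f(u)=0$, so $\sum_{w\in N(u)} f(w)\ge 2$, and this sum receives a contribution of $2$ from $v$. The worry in lowering $f(v)$ to $1$ is exactly that some $u\in C$ might have all its weight coming from $v$, i.e. $\sum_{w\in N(u)\setminus\{v\}} f(w) = 0$. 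In that case lowering $f(v)$ to $1$ breaks domination at $u$. To repair this, I would instead set $f(v)=1$ and $f(u)=1$ for \emph{one} such deficient cut-vertex $u$ — but this changes nothing in weight and risks violating independence (if $u$ has a neighbor outside $B$ with positive value) and also risks leaving a \emph{second} deficient cut-vertex $u'\in C$ undominated. The resolution: if there are two cut-vertices $u, u' \in C$ each with zero external weight, then in $f$ itself we could have lowered $f(v)$ to $0$ and set $f(u)=f(u')=1$... but independence could still fail. The cleanest route is to argue that if $f(v)=2$ is "necessary" (cannot be lowered), then $v$ together with $C$ sits in a configuration where we can instead move the weight: set $f(v)=0$, and compensate on the cut-vertices or, better, show directly that some other minimum $i_I$-function avoids $f(v)=2$ at block1-uncut-vertices.

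Concretely, I would split into cases according to how many cut-vertices $u\in C$ are "$v$-dependent" (meaning $\sum_{w\in N(u)\setminus\{v\}}f(w)=0$). Case (a): none. Then $f'(v)=1$ is still Italian dominating, and it is still independent since we only lowered a value; this contradicts $w(f)=i_I(G)$, so this case cannot occur when $f$ is minimum — fine. Case (b): exactly one, say $u_0$. Then consider $f'$ with $f'(v)=1$ and $f'(u_0)=1$: same weight. Independence: $u_0$'s only positive-valued neighbor could be problematic, but $u_0$ being $v$-dependent means \emph{all} neighbors $w\ne v$ of $u_0$ have $f(w)=0$, hence also $f'(w)=0$; and $v$ now has value $1$ while $u_0$ has value $1$, but $v$ and $u_0$ are adjacent — so $V_1$ is \emph{not} independent. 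This forces a different fix: set $f'(v)=1$, $f'(u_0)=1$, but we must also handle that $v\sim u_0$. Hmm — so instead I set $f'(v)=0$, $f'(u_0)=2$: weight unchanged, $u_0$ now has its own weight $2$ and is still independent provided all its neighbors are $0$-valued, which holds outside $B$ because $u_0$ is $v$-dependent, and inside $B$ the only nonzero vertex was $v$ which is now $0$. But now the \emph{other} cut-vertices $u\in C$, which had a neighbor of value $2$ (namely $v$), lose that — yet by assumption they were not $v$-dependent, so $\sum_{w\in N(u)\setminus\{v\}}f(w)\ge 2$ already, and that survives. And $v$ itself now has $f'(v)=0$ and a neighbor $u_0$ with $f'(u_0)=2$, so $v$ is dominated. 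Case (c): two or more $v$-dependent cut-vertices $u_0, u_1$. Then set $f'(v)=0$, $f'(u_0)=1$, $f'(u_1)=1$: weight unchanged; $u_0, u_1$ are not adjacent (they're both in block $B$ — wait, they \emph{are} adjacent in a clique!). So this fails independence too. In that sub-case I'd argue it simply cannot arise in a minimum function, or reduce it to Case (b) by a separate weight-lowering argument.

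The main obstacle, then, is the independence bookkeeping inside the clique $B$: any two vertices of $B$ are adjacent, so I cannot freely place value $1$ on two of them, and I cannot place value $1$ on $v$ and simultaneously on a cut-vertex of $B$. The real content of the lemma is showing that whenever $f(v)=2$ at a block1-uncut-vertex, there is \emph{exactly one} $v$-dependent cut-vertex (ruling out Case (c) via a minimality/weight argument, perhaps by showing two $v$-dependent cut-vertices would let us drop total weight elsewhere), and then the "$f(v)=0, f(u_0)=2$" swap in Case (b) does the job cleanly. I would present the proof by: (1) fixing a minimum $i_I$-function $f$ chosen, among all minimum ones, to maximize the number of block1-uncut-vertices with value in $\{0,1\}$ — an extremal choice; (2) assuming for contradiction some block1-uncut-vertex $v$ has $f(v)=2$; (3) running the case analysis above to produce another minimum $i_I$-function with strictly more "good" block1-uncut-vertices, contradicting the extremal choice. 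This extremal-function framing is what lets the local swaps not unravel globally.
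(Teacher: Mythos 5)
Your overall strategy is the same as the paper's: take a minimum $i_I$-function, look at a block1 $B$ with uncut-vertex $v$ having $f(v)=2$, and either lower $f(v)$ to $1$ (contradicting minimality) or shift the weight $2$ onto a cut-vertex $u_0\in C$ all of whose external neighbors carry value $0$. Your case (a) is exactly the paper's second case, and your case (b) swap $f(v)=0$, $f(u_0)=2$ is exactly the paper's first case. The extremal choice of $f$ (maximizing the number of block1 uncut-vertices with value in $\{0,1\}$) is a reasonable, even slightly more careful, way to make the iteration over blocks rigorous than what the paper does.

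However, as written there is a genuine gap: you never resolve case (c), where two or more cut-vertices of $B$ are $v$-dependent, and the escape hatches you suggest do not work. The claim that case (c) ``simply cannot arise in a minimum function'' is false: take a triangle $\{v,u_0,u_1\}$, attach to $u_0$ an edge-block $\{u_0,a\}$ and to $a$ a triangle $\{a,x,y\}$ with a pendant $z$ at $x$, and do the same on the $u_1$ side; the function with value $2$ on $v$, $x$, $x'$ is minimum, and both $u_0$ and $u_1$ are $v$-dependent. So case (c) must be handled directly, and the correct observation — which also repairs the flawed step in your case (b) — is that $B$ is a clique, so \emph{every} vertex of $B\setminus\{u_0\}$, including $v$ and all other $v$-dependent cut-vertices, is adjacent to $u_0$; hence after the swap $f(v)=0$, $f(u_0)=2$ each of them sees weight $2$ from $u_0$ alone, and independence holds because $u_0$'s external neighbors have value $0$ by $v$-dependence while all of $B$ now has value $0$. (Your case (b) justification that a non-$v$-dependent cut-vertex has external weight at least $2$ is wrong — $v$-dependence only fails when the external weight is at least $1$ — but it is unnecessary once you invoke adjacency to $u_0$.) With that one observation, cases (b) and (c) collapse into the paper's single case ``there exists $u_0\in C$ with all external neighbors of value $0$,'' and your argument goes through.
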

\begin{proof}
    Assuming that $f$ is an independent Italian dominating function of $G$ with $w(f) = i_{I}(G)$ and $B$ is an arbitrary block1 of $G$. Let $v$ be the only uncut-vertex of $B$ such that $f(v)=2$, then $\forall u \in N_{G}(v)$, $f(u) = 0$. Let $G^{'} = G - B$, if $\exists u_0 \in N_{G}(v)$ satisfying $\forall u_{0}^{'} \in N_{G^{'}}(u_0)$, $f(u_{0}^{'}) = 0$, then let $f(v) = 0$ and $f(u_0) = 2$.
    If $\forall u_1 \in N_{G}(v)$, $\exists u_{1}^{'} \in N_{G^{'}}(u_1)$ satisfying $f(u_1^{'}) \neq 0$, then let $f^{'}(v) = 1$ and $f^{'}|_{G-v} = f|_{G-v}$. Obviously, $f^{'}$ is also an independent Italian dominating function, however, $w(f) = w(f^{'}) +1$, then $w(f^{'}) < i_{I}(G)$, contradiction. Therefore, $f(v) \neq 2$ and $f(v) \in \{0, 1 \}$.
\end{proof}

\begin{property} \label{PropertyFiveProperty}
\begin{spacing}{1.27}
     Five properties of an induced independent Italian dominating function $f_*$ of $T$ will be given below: \\
     \noindent(1) If $b$ is  a block0-vertex  of $T$, then $f_{*}(b) = 0$. \\
     \noindent(2) If $b$ is a block2-vertex of $T$, then $f_{*}(b) \in \{0, 2 \}$. \\
     \noindent(3) If $b$ is a block1-vertex or block2-vertex with $f*(b)=0$, then there exists only one vertex $u \in N_{T}(b)$ such that $f_{*}(u) = 2$. \\
     \noindent(4) If $v$ is a cut-vertex of $T$ with $f*(v)=0$, then there exists $u \in N_{T}^{2}(v)$ such that $f_*(u)= 2$ or exists $u_1,u_2 \in N_{T}^{2}(v)$ such that $f_*(u_1)=f_*(u_2)=1$ where $u_1$ and $u_2$ are not adjacent with the same block-vertex.  \\
     \noindent(5) If $v$ is a cut-vertex of $T$ with $f_*(v)\neq 0$, then  $ \forall w \in N_{T}^{2}(v), f_*(w)=0$. If $b$ is a block1-vertex with $f_*(b) \neq 0$, then $\forall w \in N_{T}(b)$, $f_*(w)=0$. If $b$ is a block2-vertex with $f_*(b) \neq 0$, then $\forall w \in N_{T}(b)$, $f_*(w)=0$  and there is only one uncut-vertex $u$ in the corresponding block2 $B$ of $G$ such that $f(u)\neq 0$.
\end{spacing}
\end{property}
\begin{proof}
      Let $b$ be an arbitrary block-vertex of $T$ and $B$ is the corresponding block of $b$ in $G$, and $C$ is the set of cut-vertices of $B$. Getting $f_*(b)=f(B)-f(C)$ according to the definition \ref{definition3}. Proving the properties in order: \\
     \noindent(\romannumeral1)If $b$ is a block0-vertex, then $|B|=|C|$. Therefore, $f(B)-f(C)=0$ and $f_*(b)=0$. \\
     \noindent(\romannumeral2)If $b$ is a block2-vertex, then$f(B)-f(C) \in \{0,2\}$. Therefore, $f_*(b) \in \{0,2 \}$.\\
     \noindent(\romannumeral3) If $b$ is a block1-vertex or block2-vertex of $T$ with $f_*(b)=0$, then $B$ is the block1 or block2 of $G$ with $f(B)-f(C)=0$. We can easily find $f(v)=0$ for any uncut-vertex $v \in B$. Hence, there exists only one cut-vertex $u \in B$ such that $f(u)=2$. Since $u$ is also the cut-vertex of the block-cutpoint graph $T$, then $f_*(u)=f(u)=2$. Therefore, $\exists ! u \in N_{T}(b)$ such that $f_*(u)=2$. \\
     \noindent(\romannumeral4)If $v$ is a cut-vertex of $T$ with $f_*(v)=0$, then $v$ is also the cut-vertex of $G$ with $f(v)=0$. Since $f$ is an iDRDF of $G$, therefore, there exists $u^{'} \in N_{G}(v)$ such that $f(u^{'})=2$ or exists $u_{1}^{'},u_{2}^{'} \in N_{G}(v)$ such that  $f(u_1^{'})=f(u_2^{'})=1 $ where $u_1^{'}$ and $u_2^{'}$ are not in the same block of $G$. We can find that there exists $u \in N_{T}^{2}(v)$ such that $f_*(u)=2$  if $\exists u^{'} \in N_{G}(v)$ with $f(u^{'})=2$ and there exists $u_1, u_2 \in N_{T}^{2}(v)$ such that $f_*(u_1)=f_*(u_2)=2$ where $u_1$ and $u_2$ are not adjacent with the same block-vertex if $\exists u_1^{'},u_2^{'} \in N_{G}(v)$ with $f(u_1^{'})=f(u_2^{'})=1$ where $u_1^{'}$ and $u_2^{'}$ are not in the same block. \\
     \noindent(\romannumeral5)If $v$ is a cut-vertex of $T$ with $f_*(v) \neq 0$, then $v$ is also the cut-vertex of $G$ with $f(v)=f_*(v) \neq 0$. Since $f$ is an independent  Italian dominating function of $G$, then $\forall w^{'} \in N_{G}(v), f(w^{'})=0 $ and this shows that $ \forall w \in N_{T}^{2}(v), f_*(w)=0$. If $b$ is a block1-vertex of $T$ with $f_*(b) \neq 0$, then $f(B)-f(C)=f_*(b) \neq 0$. We can find that there exists only one uncut-vertex $u \in B$ such that $f(u) \neq 0$, therefore, $\forall w^{'} \in N_{G}(u), f(w^{'})=0$. Hence, $\forall w \in N_{T}(b), f_*(w)=0$. If $b$ is a bock2-vertex of $T$ with $f_*(b) \neq 0$, then $f(B)-f(C) \neq 0$. Hence, there exists $v\in B$ such that $f(v) \neq 0$ and $v$ is unique for the independence of $f$. We can find that $\forall w^{'} \in N_{G}(v), f(w^{'})=0$, therefore $\forall w \in N_{T}(b), f_*(w)=0$.
\end{proof}

     With the accomplishment of  the proof of the five properties in property \ref{PropertyFiveProperty}, we can solve the problem that how to distinguish whether a function $f_*$ of the block-cutpoint graph $T$ is an induced independent Italian dominating function or not.  The problem can be  solved based on the new theorem below. It is after the proof of the new theorem that we can design the linear time algorithm to output the Italian domination number $i_{I}(G)$ of any connected block graph $G$. \\

\begin{theorem} \label{ThmEquiRlationship}
$T$ is the block-cutpoint graph of a block graph $G$. Given a function $f:V(G) \rightarrow \{0,1,2\}$,  $f_*$ is the function  of $T$ induced by $f$. $f$ is an independent Italian dominating function of $G$ such that $f(v) \in \{0,1\}$ for any uncut-vertex $v$ in an arbitrary block1 $B$ of $G$  if and only if  $f_*$ satisfies the 5 properties of  property \ref{PropertyFiveProperty} and $f_*(b) \in \{0, 1 \}$ for any block1-vertex $b\in T$.
\end{theorem}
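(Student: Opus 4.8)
The plan is to prove the two implications of the equivalence separately, since it relates a property of $f$ on $G$ to a property of $f_*$ on $T$. The forward implication is essentially already in hand: if $f$ is an independent Italian dominating function of $G$, then by Definition \ref{definition3} $f_*$ is an induced independent Italian dominating function of $T$, so Property \ref{PropertyFiveProperty} supplies all five properties at once; and for a block1-vertex $b$ with corresponding block1 $B$, cut-vertex set $C$, and unique uncut-vertex $v$, the identity $f_*(b)=f(B)-f(C)=f(v)$ together with the hypothesis $f(v)\in\{0,1\}$ gives $f_*(b)\in\{0,1\}$.

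The substance lies in the backward implication. Assume $f$, and hence $f_*$, are such that $f_*$ satisfies the five properties and $f_*(b)\in\{0,1\}$ for every block1-vertex $b$; I must show that $V_1\cup V_2$ is independent in $G$, that $\sum_{u\in N_G(v)}f(u)\ge 2$ whenever $f(v)=0$, and that $f(v)\in\{0,1\}$ for every uncut-vertex $v$ of a block1. The last point is immediate, since for the unique uncut-vertex $v$ of a block1 with block-vertex $b$ one has $f(v)=f_*(b)\in\{0,1\}$. For the first two I would first record a small dictionary translating $T$-neighborhoods back to $G$: for a block-vertex $b$ with block $B$, $N_T(b)$ is exactly the set of cut-vertices of $B$; for a cut-vertex $v$, every vertex of $N_T^2(v)$ other than $v$ is a cut-vertex of $G$ lying in $N_G(v)$; and $f_*$ agrees with $f$ on every cut-vertex, while $f_*(b)$ equals the sum of the $f$-values of the uncut-vertices of $B$.

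For independence, suppose adjacent $u,v\in V_1\cup V_2$ lie in a common block $B$ with block-vertex $b$, and split on their roles in $B$. If $u$ and $v$ are both cut-vertices of $B$, then $v\in N_T^2(u)$ (both are joined to $b$ in $T$), so property (5) forces $f(v)=f_*(v)=0$, a contradiction. If $u$ is a cut-vertex and $v$ an uncut-vertex, then $B$ is a block1 or a block2, and $f_*(b)$, being the sum of the $f$-values of the uncut-vertices of $B$, is at least $f(v)>0$; hence property (5) gives $f_*(w)=0$ for every $w\in N_T(b)$, in particular $f(u)=f_*(u)=0$, a contradiction. If $u$ and $v$ are both uncut-vertices, then $B$ is a block2 and $f_*(b)\ge f(u)+f(v)\ge 2$, so $f_*(b)=2$ by property (2); but property (5) then asserts $B$ has a single uncut-vertex with nonzero value, contradicting the choice of $u$ and $v$.

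For the domination condition, let $f(v)=0$. If $v$ is a cut-vertex, then $f_*(v)=0$, and property (4) supplies either a vertex of $N_T^2(v)$ with $f_*$-value $2$ or two vertices of $N_T^2(v)$ lying in distinct blocks, each with $f_*$-value $1$; by the dictionary these are distinct vertices of $N_G(v)$ carrying the same $f$-values, so $\sum_{u\in N_G(v)}f(u)\ge 2$. If $v$ is an uncut-vertex, it lies in a unique block $B$, which is a block1 or block2, and $N_G(v)=B\setminus\{v\}$. When $B$ is a block1, or a block2 with $f_*(b)=0$ (so every uncut-vertex of $B$, consistently including $v$, has value $0$), property (3) yields a cut-vertex $u\in N_T(b)\subseteq N_G(v)$ with $f(u)=f_*(u)=2$; when $B$ is a block2 with $f_*(b)=2$, property (5) yields the unique uncut-vertex $u$ of $B$ with $f(u)\ne 0$, and since the uncut-values sum to $f_*(b)=2$ we get $f(u)=2$, with $u\ne v$ because $f(v)=0$, so $u\in N_G(v)$. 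In every case $\sum_{u\in N_G(v)}f(u)\ge 2$, so $f$ is an independent Italian dominating function with the stated behaviour on block1-uncut-vertices. I expect the main obstacle to be not any single hard step but the completeness of this case analysis: exhausting the adjacency/block-type combinations for independence, and, for an uncut-vertex of a block2, cleanly separating $f_*(b)=0$ from $f_*(b)=2$ so that in each subcase a weight-$2$ witness is produced inside $B$. Once the $N_T$- and $N_T^2$-to-$N_G$ dictionary is stated precisely, each individual case reduces to a one-line check.
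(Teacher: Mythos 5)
Your proposal is correct and follows essentially the same route as the paper: necessity from Property \ref{PropertyFiveProperty} together with the identity $f_*(b)=f(B)-f(C)=f(v)$ for the unique uncut-vertex of a block1, and sufficiency by translating the five properties back to $G$ through the correspondence between $N_T(b)$, $N_T^2(v)$ and $N_G$-neighborhoods, checking independence, the Italian domination condition, and the block1 value constraint. Your only deviations are organizational and, if anything, slightly tidier: you argue independence by contradiction on an adjacent nonzero pair (which handles the mixed cut/uncut case more explicitly than the paper does), and in the block2 domination case you split on $f_*(b)\in\{0,2\}$ via properties (2), (3), (5) rather than invoking Theorem \ref{block2-uncut-vertex}, avoiding an appeal to a statement whose hypothesis ($f$ an independent Italian dominating function) is exactly what is being established.
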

\begin{proof}
     \noindent(\uppercase\expandafter{\romannumeral1})Necessity:
     If $f$ is an independent Italian dominating function  of $G$, then $f_*$ is an induced independent Italian dominating function of $T$. Therefore, the five properties of property \ref{PropertyFiveProperty} are obviously correct. $f(v) \in \{0,1\}$ for $v \in B$ where $B$ is an arbitrary block1 of $G$ and $v$ is an uncut-vertex. Let $b$ be the block1-vertex of $T$ corresponding to $B$.  Since there exists only one  uncut-vertex in block1, $f(B)-f(C)=f(v)$ and $f_*(b)=f(B)-f(C)=f(v)$. Hence, $f_*(b) \in \{0,1 \}$. The arbitrariness of $B\subseteq G $ can promise the arbitrariness of $b \in T$.

     \noindent(\uppercase\expandafter{\romannumeral2})Sufficiency:
     If $f_*$ is a function of $T$ such that $f_*(b)\in \{0, 1\}$ for any block1-vertex $b\in T$ and $f_*$ satisfies property \ref{PropertyFiveProperty}, we just need to prove that the corresponding function $f$ is an induced independent Italian dominating function of $G$ and $\forall v \in B, f(v)\in \{0,1\}$ where $B$ is an arbitrary block1 and $v$ is an uncut-vertex.

     (\romannumeral1)Proving $f$ is an independent function of $G$: Let $v$ be a cut-vertex of $G$ with $f(v) \neq 0$, then $v$ is also the cut-vertex of $T$ with $f_*(v)=f(v) \neq 0$. Then $\forall w\in N_{T}^{2}(v), f_*(w)=0$  according to the fifth property of property \ref{PropertyFiveProperty}. Therefore, $\forall w^{'} \in N_{G}(v), f(w^{'})=0$. ~ Let $u \in B$ be an uncut-vertex of $G$ with $f(u) \neq 0$. If $B$ is a block1 and $b$ is the corresponding block1-vertex, then $f_*(b)=f(u) \neq 0$. Hence, $\forall w_1 \in N_{T}(b), f_*(w_1)=0$ according the fifth property of property \ref{PropertyFiveProperty} where $w_1$ is the cut-vertex of $T$ and $G$, so $f(w_1^{'})=0$  $\forall w_1^{'} \in N_{G}(u)$ since $u$ is the only one uncut-vertex of $B$. If $B$ is a block2 and the corresponding $b$ is a block2-vertex of $T$ with $f_*(b) \neq 0$, then $\forall w_2\in N_T(b), f_*(w_2)=0$ and there exists only one uncut-vertex $v \in B$ such that $f(v) \neq 0$ according to the fifth property of property \ref{PropertyFiveProperty}, hence $\forall w_2^{'}\in N_{G}(u), f(w_2^{'})=0$. Therefore, $f$ is an independent function of $G$.

     (\romannumeral2) Proving $f$ is an Italian dominating function of $G$ in this part: Let $v$ be a cut-vertex of $G$ with $f(v)=0$, then $v$ is also the cut-vertex of $T$ with $f_*(v)=f(v)=0$. There exists $u\in N_{T}^{2}(v)$ such that $f_*(u)=2$ or exists $u_1,u_2 \in N_{T}^{2}(v)$ such that $f_*(u_1)=f_*(u_2)=1$ where $u_1$ and $u_2$ are not adjacent with the same block-vertex according to the forth property of
     property \ref{PropertyFiveProperty}. Therefore, there exists $u^{'} \in N_{G}(v)$ such that $f(u^{'})=2$ or exists $u_1^{'},u_2^{'} \in N_{G}(v)$ such that $f(u_1^{'})=f(u_2^{'})=1$ where $u_1^{'}$ and $u_2^{'}$ are not in the same block of $G$. ~  Let $u \in B$ be an uncut-vertex of $G$ with $f(u)=0$. If $B$ is a block1, then $f_*(b)=f(u)=0$. There exists only one vertex $v^{'} \in N_{T}(b)$ such that $f_*(v^{'})=2$ according to the third property of property \ref{PropertyFiveProperty}. Since $v^{'}$ is a cut-vertex of $T$, then $v^{'}$ is  also a cut-vertex of $G$ and $v^{'} \in N_{G}(u)$, getting $f(v^{'})=f_*(v^{'})=2$. If $B$ is a block2, then $f(w) \in \{0,2\}$ for any uncut-vertex $w \in N_{G}(u)$ according to theorem \ref{block2-uncut-vertex}. $f_*(b)=0$ when $f(w)=0$ for any uncut-vertex $w \in N_{G}(u)$, then there exists $v^{''} \in N_{T}(b)$ such that $f_*(v^{''})=2$ where $v^{''}$ is a cut-vertex of $T$ according to the third property of property \ref{PropertyFiveProperty}, so $v^{''}$ is also a cut-vertex of $G$ and $v^{''} \in N_{G}(u)$, then $f(v^{''})=f_*(v^{''})=2$. Otherwise, there exists an uncut-vertex $w_0 \in N_{G}(u)$ such that $f(w_0)=2$, hence $\sum_{v^{''} \in N_{G}(u)}f(v^{''})=2$. Therefore, $f$ is an Italian dominating function of $G$.

     (\romannumeral3)Proving that $f(v) \in \{0,1\}$ for any uncut-vertex $v$ of an arbitrary block1 $B_1$ of $G$: Since $f_*(b) \in \{0,1 \}$ for any block1-vertex $b$ of $T$  and there exists only one uncut-vertex $v$ in block1 $B$, then $f(v)=f(B)-f(C)=f_*(b)$, therefore $f(v) \in \{0,1\}$. \\
\end{proof}

  Theorem \ref{ThmEquiRlationship} set up an equivalent relationship between a block graph $G$ and its block-cutpoint graph $T$, then  we can transfer the independent Italian domination problem of $G$ to the induced independent Italian domination problem of $T$. In order to find the Italian domination number $i_{I}(G)$, we just need to find the corresponding induced Italian domination number $i_{I}^{*}(T)$. Since the structure of $T$ is a tree, we can design a linear time algorithm to compute $i_{I}^{*}(T)$ based on dynamic programming.\\

\section{Algorithm}
\qquad  In the new algorithm, ten domination numbers will be given first. Given a connected block graph $G$, let $T$ be the block-cutpoint graph of $G$.  What we want to do is to find $i_{I}^{*}(T)$. During the process of  designing the algorithm, we just need to consider the IIIDF $f_{*}$ of $T$ which satisfies the five properties of
property \ref{PropertyFiveProperty} and $f_{*}(b) \in \{0,1\}$ where $b$ is a block1-vertex.\\


\noindent \begin{tabular}{l}
    \specialrule{0em}{1.4pt}{1.4pt}
    $i_{c}^{0}(T,u)=min\{w(f): f$ is an IIIDF of $T$ with $f(u)=0$, $u$ is a cut-vertex$ \}$ \\
    \specialrule{0em}{1.4pt}{1.4pt}
    $i_{c}^{1}(T,u)=min\{w(f): f$ is an IIIDF of $T$ with $f(u)=1$, $u$ is a cut-vertex$ \}$ \\
    \specialrule{0em}{1.4pt}{1.4pt}
    $i_{c}^{2}(T,u)=min\{w(f): f$ is an IIIDF of $T$ with $f(u)=2$, $u$ is a cut-vertex$ \}$ \\
    \specialrule{0em}{1.4pt}{1.4pt}
    $i_{c}^{00}(T,u)=min\{w(f): f$ is an IIIDF of $T-u$, $u$ is a cut-vertex $\}$ \\
    \specialrule{0em}{1.4pt}{1.4pt}
    $i_{c}^{01}(T,u)=min\{w(f_1): f$ is an IIIDF of $T$ with $f(u)=0$ and  $f_1$ is an IIIDF of $T+uw$ with \\ $f_{1}(w)=1$  and $f_1|_{T}=f$,
    $u$ is a cut-vertex,$w$ is a blokc1-vertex $\}$


 \end{tabular}


\noindent
\begin{tabular}{l}
  \specialrule{0em}{1.4pt}{1.4pt}
  $i_{b}^{0}(T,u)=min\{w(f): f$ is an IIIDF of $T$ with $f(N_{T}[u])=0$, $u$ is a block-vertex$\}$  \\
  \specialrule{0em}{1.4pt}{1.4pt}
  $i_{b}^{1}(T,u)=min\{w(f): f$ is an IIIDF of $T$ with $f(N_{T}[u])=1$, $u$ is a block-vertex$\}$ \\
  \specialrule{0em}{1.4pt}{1.4pt}
  $i_{b}^{2}(T,u)=min\{w(f): f$ is an IIIDF of $T$ with $f(N_{T}[u])=2$, $u$ is a block-vertex$\}$ \\
  \specialrule{0em}{1.4pt}{1.4pt}
  $i_{b}^{01}(T,u)=min\{w(f_1): f$ is an IIIDF of $T$ with $f(N_{T}[u])=0$ and $f_1$ is an IIIDF of $T + uw$ \\ with  $f_{1}(w)=1$ and $f_{1}|_{T}=f$, $u$  is a block-vertex,$w$ is a cut-vertex $\}$ \\
  \specialrule{0em}{1.4pt}{1.4pt}
  $i_{b}^{02}(T,u)=min\{w(f_1): f$ is an IIIDF of $T$ with $f(N_{T}[u])=0$ and $f_1$ is an IIIDF of $T + uw$ \\ with  $f_{1}(w)=2$ and $f_{1}|_{T}=f$, $u$  is a block-vertex,$w$ is a cut-vertex $\}$ \\
\end{tabular}\\


\begin{theorem}\label{TheoremJudgingidRG}
     $T$ is the block-cutpoint graph of a block graph $G$ and $u$ is a specific vertex of $T$. If $u$ is a cut-vertex, then $i_{I}(G)=min\{i_{c}^{0}(T,u), i_{c}^{1}(T,u), i_{c}^{2}(T,u)\}$. If $u$ is a block-vertex, then $i_{I}(G) = min\{i_{b}^{0}(T,u), i_{b}^{1}(T,u), i_{b}^{2}(T,u)\}$.
\end{theorem}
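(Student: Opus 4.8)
The plan is to peel off the reductions already in place and then finish with a one-line case split on $u$. First I would invoke Lemma~\ref{corollary1} to rewrite the target as $i_I(G)=i_I^{*}(T)$. By Definition~\ref{definition3} the assignment $f\mapsto f_{*}$ carries independent Italian dominating functions of $G$ to IIIDFs of $T$, every IIIDF of $T$ arises this way, and $w(f)=w(f_{*})$; hence $i_I(G)=i_I^{*}(T)=\min\{\,w(h):h\text{ is an IIIDF of }T\,\}$. So it suffices to show this minimum equals the displayed minimum of three numbers, under the harmless convention that a minimum over an empty family is $+\infty$ (this only matters if no IIIDF realizes a particular value at $u$).

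For the case that $u$ is a cut-vertex, the argument is immediate: every IIIDF $h$ of $T$ satisfies $h(u)\in\{0,1,2\}$ simply because that is the codomain. Partitioning the family of IIIDFs of $T$ into the three subfamilies with $h(u)=0$, $h(u)=1$, $h(u)=2$, the three per-class minimum weights are by definition $i_{c}^{0}(T,u)$, $i_{c}^{1}(T,u)$, $i_{c}^{2}(T,u)$, and the overall minimum over the union is the minimum of these three. This gives the first formula.

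For the case $u=b$ a block-vertex, let $B$ be the block of $G$ corresponding to $b$ and $C$ its set of cut-vertices. By Definition~\ref{definition1} the neighbors of $b$ in $T$ are exactly the vertices of $C$, so $N_{T}[b]=\{b\}\cup C$. If $h=f_{*}$ is an IIIDF of $T$, then $h(N_{T}[b])=f_{*}(b)+\sum_{v\in C}f_{*}(v)=\big(f(B)-f(C)\big)+f(C)=f(B)$. Since $B$ induces a clique of $G$ and the support $V_{1}\cup V_{2}$ of $f$ is independent, at most one vertex of $B$ carries a positive value, so $f(B)\le 2$ and therefore $h(N_{T}[b])\in\{0,1,2\}$. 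Partitioning the IIIDFs of $T$ by the value of $h(N_{T}[b])$ then yields three subfamilies with per-class minimum weights $i_{b}^{0}(T,u)$, $i_{b}^{1}(T,u)$, $i_{b}^{2}(T,u)$, and the second formula follows exactly as before.

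The only step carrying genuine content is the identity $h(N_{T}[b])=f(B)$ together with the bound $f(B)\le 2$: one has to see that summing an IIIDF over the closed neighborhood of a block-vertex precisely recovers the total $f$-weight of the corresponding block of $G$, and that the clique-plus-independence structure caps that weight at $2$. Everything else is the routine bookkeeping that the listed cases are exhaustive and that a minimum over a finite partition is the minimum of the per-class minima; in particular no appeal to Theorem~\ref{ThmEquiRlationship} or Theorem~\ref{ExistPosiblilityOfMiDRDF-02} is needed here, since $i_{c}^{j}$ and $i_{b}^{j}$ already range over all IIIDFs of $T$.
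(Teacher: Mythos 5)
Your proof is correct and takes essentially the same route as the paper, which simply declares the case split on $u$ (cut-vertex versus block-vertex) and the partition of IIIDFs by the value at $u$ or at $N_{T}[u]$ to be obvious. Your only addition is spelling out the detail the paper leaves implicit, namely that $h(N_{T}[b])=f(B)\in\{0,1,2\}$ because $B$ is a clique and the support of $f$ is independent, which is a sound and worthwhile clarification but not a different argument.
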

\begin{proof}
     Since the specific vertex $u$ is either a cut-vertex or a block-vertex, then the conclusion in the theorem is obviously correct.
\end{proof}

\begin{theorem}\label{ThmUnoinOfH and G-cut}
     Given two disjoint block-cutpoint graphs $G$ and $H$ with specific cut-vertex $u$ and block-vertex $v$ respectively. $T$ is a block-cutpoint graph with the specific vertex $u$, which is obtained from the disjoint union of $G$ and $H$ by joining a new edge $uv$. Then the following statements hold:

 \noindent \begin{tabular}{l}
     \specialrule{0em}{1.4pt}{1.4pt}
     $i_{c}^{0}(T,u)=min\{i_{c}^{0}(G,u)+i_{b}^{0}(H,v),
     i_{c}^{01}(G,u)+i_{b}^{1}(H,v)-1, i_{c}^{00}(G,u)+i_{b}^{2}(H,v)\}$ \\
     \specialrule{0em}{1.4pt}{1.4pt}
     $i_{c}^{1}(T,u)=i_{c}^{1}(G,u)+i_{b}^{01}(H,v)-1$ \\
     \specialrule{0em}{1.4pt}{1.4pt}
     $i_{c}^{2}(T,u)=i_{c}^{2}(G,u)+i_{b}^{02}(H,v)-2$ \\
     \specialrule{0em}{1.4pt}{1.4pt}
     $i_{c}^{00}(T,u)=i_{c}^{00}(G,u)+min\{i_{b}^{0}(H,v), i_{b}^{1}(H,v)+i_{b}^{2}(H,v)\}$ \\
     \specialrule{0em}{1.4pt}{1.4pt}
     $i_{c}^{01}(T,u)=min\{i_{c}^{01}(G,u)+i_{b}^{0}(H,v),i_{c}^{00}(G,u)+i_{b}^{1}(H,v)+1,
     i_{c}^{00}(G,u)+i_{b}^{2}(H,v)+1 \}$
 \end{tabular}
\end{theorem}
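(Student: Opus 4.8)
The plan is to prove the five identities one at a time, each by a matching pair of inequalities ($\le$ and $\ge$), taking Property \ref{PropertyFiveProperty} (together with Theorem \ref{ThmEquiRlationship}) as the working description of an IIIDF: a $\{0,1,2\}$-valued function on a block-cutpoint graph is an IIIDF exactly when it is valid \emph{locally} at every vertex, each local condition involving only that vertex together with its first and (for a cut-vertex) second neighbourhood.

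The structural fact that makes this work is that the new edge $uv$ is a bridge, and in passing from $G\sqcup H$ to $T$ the only vertices whose local condition is affected are $u$, $v$, and the cut-vertices in $N_H(v)$. Hence, writing $f=f|_G\sqcup f|_H$ and $w(f)=w(f|_G)+w(f|_H)$, the assertion ``$f$ is an IIIDF of $T$ with a prescribed value at $u$'' splits into three parts: a condition on $f|_G$ saying it is locally valid on $G$ except possibly at $u$, and at $u$ only to the extent the $G$-side is responsible; a condition on $f|_H$ saying it is locally valid on $H$ except possibly at $v$ and $N_H(v)$, and there only to the extent the $H$-side is responsible; and a compatibility relation at the bridge tying together $f(u)$, $f_*(v)$ and $f(N_H(v))$. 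The parameters with a doubled superscript ($i_c^{00}$, $i_c^{01}$ and, on the $H$-side, $i_b^{01}$, $i_b^{02}$) exist precisely to carry this ``partial responsibility'' across the bridge, and the corrections $-1$, $-2$ in the recurrences are the weight of a bridge contribution that is counted once on each side and must be subtracted.

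For ``$\le$'' I would, for each term on the right, take optimal functions realising the two or three parameters in that term, form their union with the value at $u$ the term dictates, and verify against Property \ref{PropertyFiveProperty} that the result is an IIIDF of $T$ with the right value at $u$ and the claimed weight; the minimum over terms then dominates the left-hand side. For ``$\ge$'' I would take an optimal IIIDF $f$ of $T$ achieving the left-hand side, restrict it to $G$ and to $H$ (appending a virtual pendant block1-vertex at $u$ on the $G$-side, or a virtual pendant cut-vertex at $v$ on the $H$-side, whenever a doubled-superscript parameter is in play), and case-split on (a) the value $f(u)$; (b) how the local condition at $u$ is met --- wholly from the $G$-side, wholly from the block $B_v$, or split one-and-one; and (c) the pair $\bigl(f_*(v),\,f(N_H(v))\bigr)$; in each case $w(f)$ is bounded below by one of the terms. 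Concretely: for $i_c^0(T,u)$ the three terms correspond to $f(N_H[v])=0,1,2$, i.e.\ to $B_v$ contributing $0$, $1$ or $2$ towards dominating $u$; for $i_c^1(T,u)$ and $i_c^2(T,u)$ the single term reflects that $f(u)\neq 0$ forces $f(N_T^2(u))=0$ by Property \ref{PropertyFiveProperty}(5), hence $f(N_H(v))=0$ and $f_*(v)=0$, so the only remaining freedom on $H$ is recorded by $i_b^{01}$ (resp.\ $i_b^{02}$) with its virtual cut-vertex playing the role of $u$; $i_c^{00}(T,u)$ is the minimum weight of an IIIDF of the forest $T-u=(G-u)\sqcup H$, which separates as $i_c^{00}(G,u)$ plus the minimum weight of an IIIDF of the standalone tree $H$, expressed through the $i_b$-parameters according to the type of $v$; and $i_c^{01}(T,u)$ is the $i_c^0$-analysis carried out with the additional virtual block1-vertex at $u$.

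I expect the main obstacle to be the bookkeeping in the two ``split'' identities, $i_c^0(T,u)$ and $i_c^{01}(T,u)$, where the two units needed to dominate $u$ are apportioned one to the $G$-side and one to $B_v$. There one must verify that the two contributing vertices do not lie in a common block --- the ``not adjacent with the same block-vertex'' clause of Property \ref{PropertyFiveProperty}(4) and the independence of $V_1\cup V_2$ --- which is automatic because one contributor lies in $G$ and the other in $B_v$ and these blocks meet only in $u$; and that gluing does not produce a second value-$2$ neighbour of $v$, or of a block-vertex of $G$ adjacent to $u$, which is exactly where Property \ref{PropertyFiveProperty}(3) is invoked and which is what forces the $f(N_H[v])=2$ term to be combined via $i_c^{00}(G,u)$ rather than $i_c^0(G,u)$. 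Matching the superscripts on the $G$-side parameters to the correct $f(N_H[v])$-case, and carrying the $\pm 1,\pm 2$ corrections consistently, is the delicate part; everything else is a routine check against Property \ref{PropertyFiveProperty}.
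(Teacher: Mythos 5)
Your proposal is correct and takes essentially the same route as the paper's own proof: the paper likewise splits an IIIDF of $T$ along the bridge $uv$ into $f|_G$ and $f|_H$, cases on $f(u)$ and $f(N_H[v])\in\{0,1,2\}$, uses the doubled-superscript parameters exactly as your ``virtual pendant vertex'' device, and subtracts the doubly counted bridge weights, merely asserting the resulting equivalences where you spell out the two inequalities. Incidentally, your reading of the $i_c^{00}$ recurrence as $i_c^{00}(G,u)+\min\{i_b^{0}(H,v),i_b^{1}(H,v),i_b^{2}(H,v)\}$ agrees with the paper's own proof of that case (the ``$+$'' inside the braces of the displayed formula is evidently a typo for a comma).
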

\begin{proof}
     Proving the statements in order: \\
     \noindent(\romannumeral1) Let $f$ be an IIIDF of $T$ with $f(u)=0$  and decompose $f$ into $f^{'}\cup f^{''}$ such that $f^{'}(u)=f(u)=0$, $\forall v_1 \in N_{H}[v], f^{''}(v_1)=f(v_1)$.  If $f^{''}(N_{H}[v])=f(N_{H}[v])=0$, then $f$ is a $i^{*}_{I}$-function of $T$ if and only if $f^{'}$ is a $i^{*}_{I}$-function of $G$ and $f^{''}$ is a $i^{*}_{I}$-function of $H$. If $f^{''}(N_{H}[v])=f(N_{H}[v])=1$, then $f$ is a $i^{*}_{I}$-function of $T$ if and only if $f^{''}$ is a $i^{*}_{I}$-function of $H$ and $f^{'}=f_1^{'}|_{G}$ where $f_1^{'}$ is a $i^{*}_{I}$-function of $G+uw$ with $f_1^{'}(w)=1$ and $w$ is a block1-vertex . If $f^{''}(N_{H}[v])=f(N_{H}[v])=2$, then $\exists w_0 \in N_{H}[v]$ such that $f^{''}(w_0)=2$. Hence, $f$ is a $i^{*}_{I}$-function of $T$ if and only if $f^{'}|_{G-u}$ is a $i^{*}_{I}$-function of $G-u$ and $f^{''}$ is a $i^{*}_{I}$-function of $H$.  \\

     \noindent(\romannumeral2) Let $f$ be an IIIDF of $T$ with $f(u)=1$ and decompose $f$ into $f^{'} \cup f^{''}$ such that $f^{'}(u)=f(u)=1, v_1\in N_{H}[v], f^{''}(v_1)=f(v_1)$. It is clear that $f^{''}(N_{H}[v])=f(N_{H}[v])=0$ for the independence of $f$. Therefore, $f$ is a $i^{*}_{I}$-function of $T$ if and only if $f^{'}$ is a $i^{*}_{I}$-function of $G$ and $f^{''}=f_2^{''}|_{H}$ where $f_2^{''}$ is a $i^{*}_{I}$-function of $H+vw$ with $f_{2}^{''}(1)=2$ and $w$ is a cut-vertex. \\

     \noindent(\romannumeral3) Let $f$ be an IIIDF of $T$ with $f(u)=2$ and decompose $f$ into $f^{'} \cup f^{''}$ such that $f^{'}(u)=f(u)=2, \forall v_1\in N_{H}[v], f^{''}(v_1)=f(v_1)$. It is clear that $f^{''}(N_{H}[v])=f(N_{H}[v])=0$ since the independence of $f$ . Therefore, $f$ is a $i^{*}_{I}$-function of $T$ if and only if $f^{'}$ is a $i^{*}_{I}$-function of $G$ and $f^{''}=f_2^{''}|_{H}$ where $f_2^{''}$ is a $i^{*}_{I}$-function of $H+vw$ with $f_{2}^{''}(w)=2$ and $w$ is a cut-vertex. \\

     \noindent(\romannumeral4)Let $f$ be an IIIDF of $T-u$ and decompose $f$ into $f^{'} \cup f^{''}$ such that $f^{'}$ is an IIIDF of $G-u$ and $f^{''}$ is an IIIDF of $H$. Therefore, $f$ is a $i^{*}_{I}$-function of $T$ if and only if $f^{'}$ is a $i^{*}_{I}$-function of $G-u$ and $f^{''}$ is a $i^{*}_{I}$-function of $H$. Since $i_{I}^{*}(H)=min\{i_b^{0}(H,v),
     i_{b}^{1}(H,v), i_b^{2}(H,v)  \}$, the corresponding conclusion in the theorem is correct. \\

      \noindent(\romannumeral5)Let $f$ be an IIIDF of $T+uw$ with $f(u)=0$ and $f(w)=1$ where $w$ is a block1-vertex, then $u$ is  a cut-vertex. Decompose $f$ into $f^{'} \cup f^{''}$ such that $ f^{'}(w)=f(w)=1, f^{'}(u)=f(u)=0,  \forall v_1 \in N_{H}[v], f^{''}(v_1)=f(v_1)$. If $f^{''}(N_{H}[v])=0$, then $f$ is a $i^{*}_{I}$-function of $T$ if and only if $f^{'}$ is a $i^{*}_{I}$-function of $G+uw$ and $f^{''}$ is a $i^{*}_{I}$-function of $H$. If $f^{''}(N_{H}[v])=1$, then $\sum_{v_2 \in N_{T+uw}^{2}[u]}f(v_2)=2$. Therefore, $f$ is a $i^{*}_{I}$-function of $T$ if and only if $f^{'}|_{G-u}$ is a $i^{*}_{I}$-function of $G-u$ with $f^{'}(w)=1$ and $f^{''}$ is a $i^{*}_{I}$-function of $H$. If $f^{''}(N_{H}[v])=2$, it's obvious that  $\sum_{v_2 \in N_{T+uw}^{2}[u]}f(v_2)=3$ . Therefore, $f$ is a $i^{*}_{I}$-function of $T$ if and only if $f^{'}|_{G-u}$ is a $i^{*}_{I}$-function of $G-u$ with $f^{'}(w)=1$ and $f^{''}$ is a $i^{*}_{I}$-function of $H$. \\
\end{proof}

\begin{theorem}\label{ThmUnoinOfH and G-block}
      Given two disjoint block-cutpoint graphs $G$ and $H$ with specific block-vertex $u$ and cut-vertex $v$ respectively. $T$ is a graph with the specific vertex $u$, which is obtained from the disjoint union of $G$ and $H$ by joining a new edge $uv$. Then the following statements hold:
\noindent
\begin{tabular}{l}
   \specialrule{0em}{1.4pt}{1.4pt}
   $i_{b}^{0}(T,u)=i_{b}^{0}(G,u) + i_{c}^{0}(H,v)$ \\
   \specialrule{0em}{1.4pt}{1.4pt}
   $i_{b}^{1}(T,u)=min\{i_{b}^{1}(G,u)+i_{c}^{01}(H,v)-1,
      i_{b}^{01}(G,u)+i_{c}^{1}(H,v)-1\}$ \\
   \specialrule{0em}{1.4pt}{1.4pt}
   $i_{b}^{2}(T,u)=min\{i_{b}^{2}(G,u)+i_{c}^{00}(H,v),
      i_{b}^{02}(G,u)+i_{c}^{2}(H,v)-2 \}$ \\
   \specialrule{0em}{1.4pt}{1.4pt}
   $i_{b}^{01}(T,u)=i_{b}^{01}(G,u)+i_{c}^{01}(H,v)-1$ \\
   \specialrule{0em}{1.4pt}{1.4pt}
   $i_{b}^{02}(T,u)=i_{b}^{02}(G,u) + i_{c}^{00}(H,v)$\\
\end{tabular}

\end{theorem}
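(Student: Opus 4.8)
The plan is to follow the template of the proof of Theorem~\ref{ThmUnoinOfH and G-cut}, with the two sides interchanged: the unique edge $uv$ of $T$ joining $V(G)$ to $V(H)$ now runs from the block-vertex $u\in V(G)$ to the cut-vertex $v\in V(H)$. Since $uv$ is a bridge, every IIIDF $f$ of $T$ splits as $f=f'\cup f''$ with $f'=f|_{V(G)}$, $f''=f|_{V(H)}$, and $w(f)=w(f')+w(f'')$; conversely such a pair glues to a function on $T$. Via Theorem~\ref{ThmEquiRlationship} one may freely pass between $f$ on $T$ and the corresponding function on the underlying block graph, which is the convenient setting for tracking domination. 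The one structural fact to extract first is the effect of the glue on the cut-vertex $v$: attaching $v$ to the block-vertex $u$ makes $v$ a new cut-vertex of the block $B_u$, so \emph{every} vertex of $B_u$ becomes a neighbour of $v$; hence when $f(v)=0$, $v$ receives exactly $f_*(u)+\sum_{c\in N_G(u)}f_*(c)=f(N_G[u])$ units of Italian-domination ``help'' from $B_u$, where $N_G[u]$ denotes the closed neighbourhood of $u$ inside the block-cutpoint graph of $G$. Dually, the block-condition at $u$ (Property~\ref{PropertyFiveProperty}(1)--(3),(5), together with $f_*(u)\in\{0,1\}$ when $u$ is a block1-vertex) now also involves $f(v)$, since $v\in N_T(u)$. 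I would prove the five identities by exhibiting, for each, an optimal $f$ realising the left side, decomposing it, reading off which $G$-side and $H$-side quantities it induces, and then verifying the converse gluing — grouping the two purely additive cases first.

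For $i_b^0(T,u)$: $f(N_T[u])=0$ forces $f(u)=f(v)=0$ and $f(N_G(u))=0$, so $f'$ realises $i_b^0(G,u)$; since $v$ gets $f(N_G[u])=0$ help from $B_u$, it must be dominated entirely inside $H$, i.e. $f''$ realises $i_c^0(H,v)$; the identity is additive. The identity $i_b^{02}(T,u)=i_b^{02}(G,u)+i_c^{00}(H,v)$ is the same once one notes that the pending value-$2$ cut-vertex attached to $u$ in the $i_b^{02}$-configuration lies at distance $3$ from $v$ and so contributes nothing to $v$'s help count; with $f(N_G[u])=0$ again forced and $f(v)=0$, the vertex $v$ gets no help and may be deleted on the $H$-side, giving $i_c^{00}(H,v)$, while the pending vertex is paid for on the $G$-side, so the sum is plain.

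For the branching identities I would split on $f(N_G[u])\in\{0,1,2\}$ (equivalently on $f(v)$, which equals $f(N_T[u])-f(N_G[u])$). For $i_b^1(T,u)$: either $f(v)=1$ and $f(N_G[u])=0$, in which case the $G$-side is the $i_b^{01}(G,u)$-configuration with $v$ in the role of the pending value-$1$ cut-vertex attached to $u$, the $H$-side is $i_c^1(H,v)$, and the single unit, counted on both sides, is reconciled by ``$-1$''; or $f(v)=0$ and $f(N_G[u])=1$, in which case $v$ receives one unit of help from $B_u$ and needs only one more from inside $H$, the least-weight such $H$-configuration having weight $i_c^{01}(H,v)-1$, while the $G$-side is $i_b^1(G,u)$ — giving $i_b^1(G,u)+i_c^{01}(H,v)-1$. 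Minimising over the two cases gives the formula. Likewise $i_b^2(T,u)$ splits into $f(v)=2$, $f(N_G[u])=0$ (the $G$-side $i_b^{02}(G,u)$ with $v$ as the pending value-$2$ cut-vertex, the $H$-side $i_c^2(H,v)$, reconciled by ``$-2$'') versus $f(v)=0$, $f(N_G[u])=2$ ($v$ gets two units of help from $B_u$, needs none from $H$, so $v$ is deleted on the $H$-side: $i_b^2(G,u)+i_c^{00}(H,v)$); the mixed possibility $f(v)=1$, $f(N_G[u])=1$ is infeasible because the value-$1$ vertex inside $B_u$ would then be adjacent to $v$ with both in $V_1$, contradicting independence. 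Finally $i_b^{01}(T,u)$ follows the $i_b^0$-template with one extra pending value-$1$ cut-vertex attached to $u$: again $f(v)=0$, but that pending vertex is at distance $2$ from $v$ through $u$, so it counts as one unit of help toward $v$ (one of the two value-$1$ vertices allowed by Property~\ref{PropertyFiveProperty}(4)), leaving $v$ needing one more unit from inside $H$ — hence $i_b^{01}(G,u)+i_c^{01}(H,v)-1$, the ``$-1$'' reconciling the two pending counts.

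The main obstacle, exactly as in Theorem~\ref{ThmUnoinOfH and G-cut}, is the bookkeeping in the gluing-back direction. For every sub-case one must check: (1) independence across $uv$, and, more delicately, the secondary independence clashes created \emph{inside} $B_u$ (two value-$1$ cut-vertices of $B_u$, or a value-$1$ uncut-vertex of $B_u$ together with a value-$1$ cut-vertex), ruling out the impossible sub-cases with Theorem~\ref{block2-uncut-vertex}; (2) that the block-condition at $u$ survives the attachment of $v$ — the delicate case being $f_*(u)=0$ with $u$ a block1- or block2-vertex, which demands a neighbour of value $2$ and so restricts the feasible sub-cases, but this is self-correcting since the matching $G$-side quantity is then $+\infty$; and (3) that the domination of $v$ is met by \emph{exactly} the $f(N_G[u])$ units coming from $B_u$ plus the internal $H$-help supplied by the chosen $H$-quantity, so that the ``$-1$'' and ``$-2$'' corrections neither over- nor under-count the shared units. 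Once these verifications are organised sub-case by sub-case, in the same manner as the proof of Theorem~\ref{ThmUnoinOfH and G-cut}, each of the five identities follows by combining the ``$\leq$'' and ``$\geq$'' directions.
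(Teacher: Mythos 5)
Your overall strategy --- splitting an IIIDF of $T$ across the bridge $uv$, measuring the help that $v$ receives from the block $B_u$ as $f(N_G[u])$ (plus any pending vertex attached to $u$), case-splitting on the pair $(f(v),f(N_G[u]))$, and reconciling doubly counted pending weight with the ``$-1$''/``$-2$'' corrections --- is exactly the paper's argument, and your treatments of $i_b^{0}$, $i_b^{1}$, $i_b^{2}$ and $i_b^{01}$ (including the remark that the mixed case $f(v)=f(N_G[u])=1$ for $i_b^{2}$ is killed by independence, which the paper leaves implicit) match the paper's proof of this theorem.

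However, your justification of the fifth identity, $i_{b}^{02}(T,u)=i_{b}^{02}(G,u)+i_{c}^{00}(H,v)$, contains a genuine error. The pending value-$2$ cut-vertex $w$ in the $i_b^{02}$-configuration is attached to the block-vertex $u$, and $v$ is likewise attached to $u$; so $w$ is at tree-distance $2$ from $v$ through $u$ --- precisely the situation you handled correctly for $i_b^{01}$ --- which means $w$ and $v$ are two cut-vertices of the same block $B_u$ of the underlying block graph and hence adjacent there. It is not at distance $3$, and it very much contributes to $v$'s help. Moreover, the inference you draw from your (incorrect) claim is backwards: if $v$ really received no help from outside $H$, then, since $f(v)=0$, its Italian condition would have to be met inside $H$, and the correct summand would be $i_{c}^{0}(H,v)$, not $i_{c}^{00}(H,v)$; using $i_{c}^{00}(H,v)$, which is the minimum weight of an IIIDF of $H-v$ and therefore waives $v$'s domination requirement entirely, would then be unjustified and the identity would fail. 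The correct argument, and the one the paper gives, is the opposite of yours: $w$ has value $2$ and is adjacent to $v$ in the underlying block graph, so $v$ is fully dominated from outside $H$; consequently the $H$-side constraint reduces to requiring an IIIDF of $H-v$ (the domination and independence conditions for the vertices of $H-v$ are unaffected by $v$ because $f(v)=0$), which is exactly $i_{c}^{00}(H,v)$, while the weight of $w$ is already paid for inside $i_{b}^{02}(G,u)$, so no correction term appears. With this repair your case analysis coincides with the paper's proof.
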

\begin{proof}
     It is obvious that $N_{T}[u]=N_{G}[u] \cup \{v\}$, proving the statements in order: \\
     \noindent(\romannumeral1)Let $f$ be an IIIDF of $T$ with $f(N_{T}[u])=0$, then $f(N_{G}[u])=0$, $f(v)=0$. Decompose $f$ into $f^{'}\cup f^{''}$ such that $\forall u_1 \in N_{H}[u], f^{'}(u_1)=f(u_1)$, $f^{''}(v)=f(v)$. Hence, $f^{'}(N_{G}[u])=f(N_{G}[u])=0$. Therefore, $f$ is a $i^{*}_{I}$-function of $T$ if and only if $f^{'}$ is a $i^{*}_{I}$-function of $G$ and $f^{''}$ is a $i^{*}_{I}$-function of $H$. \\

     \noindent(\romannumeral2)Let $f$ be an IIIDF of $T$ with $f(N_{T}[u])=1$. Decompose $f$ into $f^{'} \cup f^{''}$ such that $\forall u^{'} \in N_{G}[u], f^{'}(u^{'})=f(u^{'})$, $f^{''}(v)=f(v)$. Since $N_{T}[u]=N_{G}[u] \cup \{v\}$, $f^{'}(N_{G}[u])=f(N_{G}[u])=0$ if $f^{''}(v)=f(v)=1$. Therefore, $f$ is a $i^{*}_{I}$-function of $T$ if and only if $f^{''}$ is a $i^{*}_{I}$-function of $H$ and $f^{'}=f^{'}_{1}|_{G}$ where
     $f_{1}^{'}$ is a $i^{*}_{I}$-function of $G+uw_1$ and $w_1$ is a cut-vertex with $f_1^{'}(w_1)=1$. $f^{'}(N_{G}[u])=f(N_{G}[u])=1$ if $f^{''}(v)=f(v)=0$. Hence, $f$ is a $i^{*}_{I}$-function of $T$ if and only if $f^{'}$ is a $i^{*}_{I}$-function of $G$ and $f^{''}=f^{''}_{2}|_{H}$ where $f^{''}_{2}$ is a $i^{*}_{I}$-function of $H+vw_2$ satisfying $w_2$ is a block1-vertex and $f_{2}^{''}(w_2)=1$. \\

     \noindent(\romannumeral3)Let $f$ be an IIIDF  of $T$ with $f(N_{T}[u])=2$. Decompose $f$ into $f^{'} \cup f^{''}$ such that $\forall u^{'} \in N_{G}[u], f^{'}(u^{'})=f(u^{'})$, $f^{''}(v)=f(v)$, then either $f(N_{G}[u])=2,f(v)=0$ or $f(N_{G}[u])=0,f(v)=2$. Since $N_{T}[u]=N_{G}[u] \cup \{v\}$,
     $f^{'}(N_{G}[u])=f(N_{G}[u])=0$ if $f^{''}(v)=f(v)=2$. Therefore, $f$ is a $i^{*}_{I}$-function of $T$ if and only if $f^{''}$ is a $i^{*}_{I}$-function of $H$ and $f^{'}=f_{1}^{'}|_{G}$ where $f_1^{'}$ is a $i^{*}_{I}$-function of $G+uw_1$ satisfying $w_1$ is a cut-vertex and $f_1^{'}(w)=2$. $f^{'}(N_{G}[u])=f(N_{G}[u])=2$ if $f^{''}(v)=f(v)=0$, then $\sum_{v_0 \in N_{T}^{2}[v]}f(v_0)=2$. Therefore, $f$ is a $i^{*}_{I}$-function of $T$ if and only if $f^{'}$ is a $i^{*}_{I}$-function of $G$ and $f^{''}|_{H-v}$ is a $i^{*}_{I}$-function of $H-v$. \\

     \noindent(\romannumeral4)Let $f$ be an IIIDF of $T+uw$ with $f(w)=1$ and  $f(N_{T}[u])=0$. Obviously, $w$ is a cut-vertex, then we can get that  $f(N_{G}[u])=0$ and $f(v)=0$.  Decompose $f$ into $f^{'} \cup f^{''}$ such that
     $f^{'}(N_{G}[u])=f(N_{G}[u])=0, f^{'}(w)=f(w)=1, f^{''}(v)=f(v)=0$, then $f$ is a $i^{*}_{I}$-function of $T+uw$ if and only if $f^{'}$ is a $i^{*}_{I}$-function of $G+uw$ and $f^{''}=f_{2}^{''}|_{H}$ where $f_{2}^{''}$ is a $i^{*}_{I}$-function of $H+vw_2$ with $f_{2}^{''}(w_2)=1$ and $w_2$ is a block1-vertex. \\

     \noindent(\romannumeral5) Let $f$ be an IIIDF of $T+uw$ with $f(N_{T}[u])=0, f(w)=2$. Obviously, $w$ is a cut-vertex and we get that $f(N_{G}[u])=0$ and $f(v)=0$.  Decompose $f$ into $f^{'} \cup f^{''}$ such that $f^{'}(N_{G}[u])=f(N_{G}[u])=0$,$f^{'}(w)=f(w)=2$, $f^{''}(v)=f(v)=0$, getting $\sum_{v_1 \in N_{T+uw}[v]}f(v_1) \geq 2$. Therefore, $f$ is a $i^{*}_{I}$-function of $T+uw$ if and only if $f^{'}$ is a $i^{*}_{I}$-function of $G+uw$ and $f^{''}=f_{2}^{''}|_{H-v}$ where $f_{2}^{''}$ is a $i^{*}_{I}$-function of $H+vw_2$ with $f_{2}^{''}(w_2)=2$ and $w_2$ is a block1-vertex.  \\
\end{proof}

We have finished proving theorem \ref{TheoremJudgingidRG}, theorem \ref{ThmUnoinOfH and G-cut} and theorem \ref{ThmUnoinOfH and G-block}. A new algorithm will be designed based on the three algorithms to output the independent domination number of any connected block graph. The algorithm is designed based on dynamic programming \cite{dynamicProgramming}.   The correctness of the algorithm can be promised by the three theorems. As for the initialization of the ten domination numbers, we just need to consider the domination numbers of the block-cutpoint graph $T$ where $T$ is only one vertex. The domination number will be initialized as $\infty$ if it does not exist.   Therefore, [$i_c^{0}(T,u)$, $i_c^{1}(T,u)$, $i_c^{2}(T,u)$, $i_c^{00}(T,u)$,$i_c^{01}(T,u)$] can be initialized as [$\infty,1,2,0,\infty$]. Initialize [$i_b^{0}(T,u)$, $i_b^{1}(T,u)$, $i_b^{2}(T,u)$, $i_b^{01}(T,u)$,$i_b^{02}(T,u)$] as [$0,\infty,\infty,1,2$] if $T$ is a block0-vertex. Initialize [$i_b^{0}(T,u)$, $i_b^{1}(T,u)$, $i_b^{2}(T,u)$, $i_b^{01}(T,u)$,$i_b^{02}(T,u)$] as [$\infty,1,\infty,\infty,2$] if $T$ is a block1-vertex. Initialize  [$i_b^{0}(T,u)$, $i_b^{1}(T,u)$, $i_b^{2}(T,u)$, $i_b^{01}(T,u)$,$i_b^{02}(T,u)$] as [$\infty,\infty,2,\infty,2$] if $T$ is a block2-vertex. The new algorithm will be given below to output the $i_{I}(G)$ of any connected block graph $G$, in which $T$ is the block-cutpoint graph of $G$. \\

\IncMargin{1em}  
\begin{algorithm} \label{AlgorithmOnBlockGraph}
   \setstretch{1.2}
   \SetKwInOut{Input}{\textbf{Input}}
   \SetKwInOut{Output}{\textbf{Output}} 
   \Input{Tree order $[v_1,v_2,...,v_n]$ of the block-cutpoint graph $T$}
   \Output{The independent Italian domination number $i_{I}(G)$}
   \BlankLine
  \uIf{$G=K_1$}{\textbf{return}$~~i_{I}(G)=1$;}
  \uIf{$G=K_m(m\geq2)$}{\textbf{return}$~~i_{I}(G)=2$;}
  $initialization:$ \\
  \For{$i=1$ to $n$}{
     \uIf{$v_i$ is a cut-vertex}{
     [$i_{c}^{0}(v_i),i_{c}^{1}(v_i), i_{c}^{2}(v_i),i_{c}^{00}(v_i),i_{c}^{01}(v_i)$] $\leftarrow$ [$\infty,1,2,0,\infty$];
     }
     \uElseIf{$v_i$ is a block0-vertex }{
     [$i_{b}^{0}(v_i),i_{b}^{1}(v_i),i_{b}^{2}(v_i),
     i_{b}^{01}(v_i),i_{b}^{02}(v_i)$ ]$\leftarrow$ [$0,\infty,\infty,1,2$];
     }
     \uElseIf{$v_i$ is a block1-vertex}{
     [$i_{b}^{0}(v_i),i_{b}^{1}(v_i) ,i_{b}^{2}(v_i),
     i_{b}^{01}(v_i),i_{b}^{02}(v_i)$ ]$\leftarrow$ [$\infty,1,\infty,\infty,2$];
     }
     \uElse{
     [$i_{b}^{0}(v_i),i_{b}^{1}(v_i),i_{b}^{2}(v_i),
     i_{b}^{01}(v_i),i_{b}^{02}(v_i)$] $\leftarrow $ [$\infty,\infty,2,\infty,2$].}
  }

  \For{$i=1$ to $n-1$}{
      let $v_j$ be the parent of $v_i$;  \\
      \uIf{$v_i$ is a cut-vertex}{
         $i_{b}^{0}(v_j) \leftarrow i_{b}^{0}(v_j) + i_{c}^{0}(v_i)$; \\
         $i_{b}^{1}(v_j) \leftarrow min\{i_{b}^{1}(v_j)+i_{c}^{01}(v_i)-1,i_{b}^{01}(v_j)+i_{c}^{1}(v_i)-1\}$; \\
         $i_{b}^{2}(v_j) \leftarrow min\{i_{b}^{2}(v_j)+i_{c}^{00}(v_i),i_{b}^{02}(v_j)+i_{c}^{2}(v_i)-2 \}$; \\
         $i_{b}^{01}(v_j)\leftarrow i_{b}^{01}(v_j)+i_{c}^{01}(v_i)-1$; \\
         $i_{b}^{02}(v_j)\leftarrow i_{b}^{02}(v_j)+i_{c}^{00}(v_i)$. \\
      }
      \uElse{
       $i_{c}^{0}(v_j) \leftarrow min\{i_{c}^{0}(v_j)+i_{b}^{0}(v_i),i_{c}^{01}(v_j)+i_{b}^{1}(v_i)-1,i_{c}^{00}(v_j)+i_{b}^{2}(v_i)\}$; \\
       $i_{c}^{1}(v_j) \leftarrow i_{c}^{1}(v_j) + i_{b}^{01}(v_i)-1 $;\\
       $i_{c}^{2}(v_j) \leftarrow i_{c}^{2}(v_j)+ i_{b}^{02}(v_i)-2$; \\
       $i_{c}^{00}(v_j)\leftarrow i_{c}^{00}(v_j) + min\{i_{b}^{0}(v_i),i_{b}^{1}(v_j),i_{b}^{2}(v_i)\}$; \\
       $i_{c}^{01}(v_j)\leftarrow min\{i_{c}^{01}(v_j)+i_{b}^{0}(v_i),i_{c}^{00}(v_j)+i_{b}^{1}(v_i)+1,i_{c}^{00}(v_j)+i_{b}^{2}(v_i) +1\}$. \\
      }
  }

  \uIf{$v_n$ is a cut-vertex}{\textbf{return}~$i_{I}(G)=min\{i_{c}^{0}(v_n)$,$i_{c}^{1}(v_n)$,$i_{c}^{2}(v_n)$\}; }
  \Else{\textbf{return}~$i_{I}(G)= min\{i_{b}^{0}(v_n)$,$i_{b}^{1}(v_n)$,$i_{b}^{2}(v_n)$\}.}
   ~~\\

\caption{Independent Italian Domination on Block Graph}
\end{algorithm}
\DecMargin{1em}

\begin{theorem}
     Given an arbitrary connected block graph  $G$ and its corresponding block-cutpoint graph $T=(V,E)$ with tree order $[v_1,v_2,...,v_n]$. Algorithm \textbf{\ref{AlgorithmOnBlockGraph}} can output the independent Italian domination number $i_{I}(G)$ of  $G$ in linear time $O(n+m)$ where $n=|V|$ and $m=|E|$.
\end{theorem}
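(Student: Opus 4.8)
The plan is to establish two things: correctness of Algorithm \ref{AlgorithmOnBlockGraph}, and its running time. For correctness, I would argue by induction on the recursive (bottom-up) processing order of the tree $T$, using Lemma \ref{corollary1} and Theorem \ref{ThmEquiRlationship} to reduce $i_I(G)$ to $i_I^*(T)$, and then showing that after the main \texttt{for} loop each of the ten partial quantities stored at a vertex $v_j$ equals the corresponding domination number of the subtree $T_{v_j}$ rooted at $v_j$ (with $T_{v_j}$ re-rooted so that $v_j$ is the ``attaching'' vertex). The base case is exactly the initialization block: one checks that for a single-vertex $T$ (a lone cut-vertex, block0-, block1-, or block2-vertex) the listed vectors $[\infty,1,2,0,\infty]$, $[0,\infty,\infty,1,2]$, $[\infty,1,\infty,\infty,2]$, $[\infty,\infty,2,\infty,2]$ are the correct values of the ten quantities (with $\infty$ whenever no admissible $f_*$ exists), which follows directly from their definitions together with Property \ref{PropertyFiveProperty}. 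The inductive step is precisely the content of Theorems \ref{ThmUnoinOfH and G-cut} and \ref{ThmUnoinOfH and G-block}: when the loop processes child $v_i$ with parent $v_j$, it merges the already-computed subtree rooted at $v_i$ into the partially-built subtree at $v_j$ via the edge $v_iv_j$, and the update formulas in the algorithm are literally the recurrences proved there (the case split on whether $v_i$ is a cut-vertex or a block-vertex matches the two theorems). Finally, when the loop terminates all children have been absorbed into $v_n$, so the ten stored values are those of the whole tree $T$, and Theorem \ref{TheoremJudgingidRG} gives $i_I(G)=i_I^*(T)=\min$ of the three relevant values at $v_n$, which is exactly what the algorithm returns; the two leading \texttt{if} branches handle the degenerate cases $G=K_1$ and $G=K_m$ noted in the text.

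For the running time, I would first recall (Lemma \ref{lemma1}, citing \cite{Createblock-cutpointgraph}) that the block-cutpoint tree $T$ together with a valid tree order $[v_1,\dots,v_n]$ can be built from $G$ in time $O(n+m)$ by depth-first search, where here $n+m$ refers to the size of $G$; note the number of vertices of $T$ is $O(n)$ since each block and each cut-vertex contributes one vertex. The initialization loop does $O(1)$ work per vertex of $T$, hence $O(n)$ total. The main loop runs once per non-root vertex, and each iteration performs a fixed number (at most a constant, here five) of arithmetic $\min$/$+$ operations on the ten $O(1)$-size labels, so it is $O(1)$ per iteration and $O(n)$ overall. The concluding step is a single constant-time $\min$. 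Summing, the algorithm runs in $O(n+m)$ time, dominated by the construction of $T$.

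The one genuine obstacle is the correctness of the update formulas themselves, i.e.\ that the \texttt{for}-loop's sequential, in-place merging really computes the subtree quantities rather than something that double-counts or omits a constraint at the junction vertex. I would address this by being careful that the recurrences of Theorems \ref{ThmUnoinOfH and G-cut} and \ref{ThmUnoinOfH and G-block} are \emph{associative} in the appropriate sense: a block-vertex $u$ in $T$ may have several cut-vertex children, and the algorithm absorbs them one at a time, each time treating the partially-merged graph as the ``$G$'' of Theorem \ref{ThmUnoinOfH and G-block} and the next child's subtree as ``$H$''. One must check that the ten quantities were defined (via Property \ref{PropertyFiveProperty}, especially properties (3), (4), (5) governing the domination interaction across a block-vertex) so that this iterated decomposition is valid — e.g.\ that $i_b^{0},i_b^{1},i_b^{2}$ track $f(N_T[u])$ precisely so that merging in a further child only needs to know which of these three buckets the current state is in, plus the auxiliary $i_b^{01},i_b^{02}$ (resp.\ $i_c^{00},i_c^{01}$) for the ``borrow one/two from outside'' situations. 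Granting the three theorems as stated (as the excerpt permits), this reduces to a bookkeeping verification that the loop variables are updated in the right order and that $i_b^{1}(v_j)$ appearing on the right-hand side of the $i_c^{00}$ update is consistent; I would simply note this matches the recurrences and leave the mechanical check to the reader, since the substantive mathematics has already been done in Theorems \ref{TheoremJudgingidRG}--\ref{ThmUnoinOfH and G-block}.
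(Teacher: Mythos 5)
Your proposal is correct and is essentially the argument the paper intends: the paper in fact states this theorem without any explicit proof, relying (as announced just before the algorithm) on Theorems \ref{TheoremJudgingidRG}, \ref{ThmUnoinOfH and G-cut} and \ref{ThmUnoinOfH and G-block} for the correctness of the bottom-up updates, on the stated initialization for the base case, and on the linear-time DFS construction of the block-cutpoint tree for the running time, which is exactly the induction-plus-$O(1)$-per-merge analysis you spell out. Your closing caution about the in-place bookkeeping is well placed (e.g.\ the algorithm's $i_{c}^{01}(v_j)$ update reads $i_{c}^{00}(v_j)$ after it has already been overwritten in the same iteration, and the $i_{b}^{1}(v_j)$ term in the $i_{c}^{00}$ update should be $i_{b}^{1}(v_i)$), but these are slips in the algorithm's transcription rather than defects of your proof strategy.
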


\newpage

\end{spacing}

\begin{thebibliography}{99}
\thispagestyle{plain}

\bibitem{rainbow domination}
\newblock B. Brešar, M. A. Henning and D. F. Rall, Rainbow domination in graphs, \emph{Taiwanese Journal of Mathematics}, 12(2008), 213-225.

\bibitem{dynamicProgramming}
\newblock G. J. Chang, Algorithmic aspects of domination ingraphs,
\emph{Handbook of Combinatorial Optimization}, 2013, 221-282.

\bibitem{Roman dominating function}
\newblock E. J. Cockayne, Jr. P. A. Dreyer and S. M. Hedetniemi, Roman domination in graphs,  \emph{Discrete Mathematics}, 278(2004), 11-22.

\bibitem{Roman2 domination}
\newblock M. Chellali, T. W. Haynes and S. T. Hedetniemi, Roman {2}-domination, \emph{Discrete Applied Mathematics}, 204(2016), 22–28.

\bibitem{Italian domination}
\newblock M. A. Henning and W. F. Klostermeyer, Italian domination in trees, \emph{Discrete Applied Mathematics}, 217(2017), 557–564.

\bibitem{Roman k-domination}
\newblock K. Kämmerling and L. Volkmann, Roman k-domination in graphs, \emph{J. Korean Math. Soc},  46(2009), 1309-1318.

\bibitem{Revelle 1}
\newblock C. S. ReVelle, Can you protect the Roman Empire, \emph{Johns Hopkins Magazine}, 49(1997), 40.

\bibitem{Revelle 2}
\newblock C. S. ReVelle  and K. E. Rosing, Defendens imperium romanum: a classical problem in military strategy, \emph{The American Mathematical Monthly}, 107(2000), 585-594.

\bibitem{Createblock-cutpointgraph}
\newblock D. B. West, Introduction to graph theory, \emph{Upper Saddle River, NJ: Prentice hall}, 1996.

\end{thebibliography}
\end{document}